\newtheorem{defi}{Definition}[section]
\newtheorem{theorem}[defi]{Theorem}
\newtheorem{conjecture}[defi]{Conjecture}
\newtheorem{lemma}[defi]{Lemma}
\newtheorem{corollary}[defi]{Corollary}
\newtheorem{remark}[defi]{Remark}
\newcommand{\A}{{\cal A}}
\newcommand{\B}{{\cal B}}
\newcommand{\C}{{\cal C}}
\newcommand{\F}{{\cal F}}
\newcommand{\G}{{\cal G}}
\newcommand{\OOO}{{\cal O}}
\newcommand{\X}{{\cal X}}
\newcommand{\Y}{{\cal Y}}
\newcommand{\maths}[1]{{\mathbb #1}}  
\newcommand{\CC}{\maths{C}}
\newcommand{\HH}{\maths{H}}
\newcommand{\NN}{\maths{N}}
\newcommand{\QQ}{\maths{Q}}
\newcommand{\RR}{\maths{R}}
\newcommand{\SSS}{\maths{S}}
\newcommand{\ZZ}{\maths{Z}}
\def\11{{\mathbbm 1}}
\newcommand{\mmm}{{\mathfrak m}}
\newcommand{\CCC}{{\mathfrak C}}
\newcommand{\weakstar}{\overset{*}\rightharpoonup}
\newcommand{\bs}{\backslash}
\newcommand{\bigO}{\operatorname{O}}
\newcommand{\card}{{\operatorname{card}}}
\newcommand{\covol}{\operatorname{covol}}
\newcommand{\diam}{{\operatorname{diam}}}
\newcommand{\id}{\operatorname{id}}
\newcommand{\pr}{\operatorname{pr}}
\newcommand{\Leb}{\operatorname{Leb}}
\newcommand{\supp}{\operatorname{supp}}
\newcommand{\ssm}{\smallsetminus}
\newcommand{\PSL}{\operatorname{PSL}}
\newcommand{\SL}{\operatorname{SL}}
\newcommand{\sys}{\operatorname{sys}}
\newcommand*{\transp}[2][-3mu]{\ensuremath{\mskip1mu\prescript{\smash{\mathrm t\mkern#1}}{}{\mathstrut#2}}}%
\newcommand\numberthis{\addtocounter{equation}{1}\tag{\theequation}}
\begin{document}
\thispagestyle{plain}
\begin{center}
	\Large
	\textbf{Gaps in the complex Farey sequence \\ of an imaginary quadratic number field}
	
	\large
	\vspace{0.3cm}
	\hspace{0.35cm} Rafael Sayous \orcidlink{0009-0007-6306-8546}
	
	\normalsize
	\vspace{0.1cm}
	\today
\end{center}

\begin{abstract}
	Given an imaginary quadratic number field $K$ with ring of integers $\OOO_K$, we are interested in the asymptotic \emph{distance to nearest neighbour} (or \emph{gap}) statistic of complex Farey fractions $\frac{p}{q}$, with $p,q \in \OOO_K$ and $0<|q|\leq T$, as $T \to \infty$. Reformulating this problem in a homogeneous dynamical setting, we follow the approach of J.~Marklof for real Farey fractions with several variables \cite{marklof2013finescalestatsfarey} and adapt a joint equidistribution result in the real $3$-dimensional hyperbolic space of J.~Parkkonen and F.~Paulin \cite{parkkonenpaulin2023jointfarey} to derive the existence of a probability measure describing this asymptotic gap statistic. We obtain an integral formula for the associated cumulative distribution function, and use geometric arguments to find an explicit estimate for its tail distribution in the cases of Gaussian and Eisenstein fractions.\footnote{{\bf Keywords:} imaginary quadratic field, nearest neighbour, convergence of probability measures, homogeneous dynamic. {\bf AMS codes:} 11B05, 11B57, 11R11, 37A44, 60B10.}
\end{abstract}

\section{Introduction}
In order to obtain refined statistical properties of deterministic real sequences $\mod 1$, their asymptotic gap distributions have been extensively studied, see \cite{sos1958threegap,hall1970fareyfraction,elkiesmcmullen2004gapssqrtn,vijay2008elevengap,marklof2013gapslogs,boca2014gapsfareydivisibilityconstraints,athreya2016gaps,rudnickzhang2017gapcirclepacking,bourgain2017spacialstatsphere,haynesmarklof2020higherdim3gaps, lutsko2022fareythinggroups, aistleitner2023gapsmod1}. This distribution remains a mystery in many explicit cases. A surprising known example is given by the sequence $(\sqrt{n} \mod 1)_{n\in\NN}$: it has been shown to exhibit a nonstandard asymptotic gap distribution by N.D.~Elkies and C.T.~McMullen \cite{elkiesmcmullen2004gapssqrtn} (the "standard" one is given by an exponential law). For the other sequences $(n^\alpha \mod 1)_{n\in\NN}$ (with $\alpha \in ]0,1[\,\ssm \{\frac{1}{2}\}$), computing the gap statistic is an open question (and numerically seems to be an exponential law, which is the case for gaps in a sequence of uniform random variables on $[0,1[\,$).

In the present paper, we begin the asymptotic study of gaps for the complex Farey sequence given by fractions with increasing bound of their height in any quadratic imaginary field. Hence we work with gaps in a $2$-dimensional torus and use the distance to nearest neighbour to define them. This has already been a fruitful choice for a generalisation of the Three Gaps Theorem \cite{vijay2008elevengap} and for studying gaps in rescaled vector points on the sphere $\SSS_2$ associated to "sum of three squares" decompositions of large integers \cite[§~1.3]{bourgain2017spacialstatsphere}.

For clarity, in this introduction we focus on the imaginary quadratic field $\QQ(i)$ (see Section \ref{sec:density_gap} for a more general setting). Let $\pr : \CC \to \CC/\ZZ[i]$ denote the canonical projection on the square torus. The \emph{Gaussian Farey sequence with height at most} $T>0$ is defined as the set
$$
\G_T = \Big\{ \pr \Big(\frac{p}{q}\Big) \, : \, p,q \in \ZZ[i], \, 0 < |q| \leq T \Big\} \subset \CC / \ZZ[i].
$$
The sets $\G_{10}$ and $\G_{20}$ can be seen in Figure \ref{fig:gauss_frac}. The Diophantine approximation properties of these fractions have been famously studied  \cite{cassels1951fareyfrac,schmidt1967fareytriangles}. For any point $x$ in $\CC$ or in $\CC/\ZZ[i]$, let $\Delta_x$ denote the Dirac mass at $x$. Denoting by $dx$ the Haar probability measure on the square torus $\CC/\ZZ[i]$, we have the equidistribution theorem (see for instance \cite[Th.~4]{cosentino1999equidibparabpts}), as $T \to +\infty$,
$$
\frac{1}{\card \, \G_T} \sum_{r \in \G_T} \Delta_r \weakstar dx.
$$
\begin{figure}[ht]
	\centering
	\scalebox{1.}{
		\begin{adjustbox}{clip, trim=5cm 0.85cm 4.55cm 1.cm, max width=\textwidth}
			\includegraphics{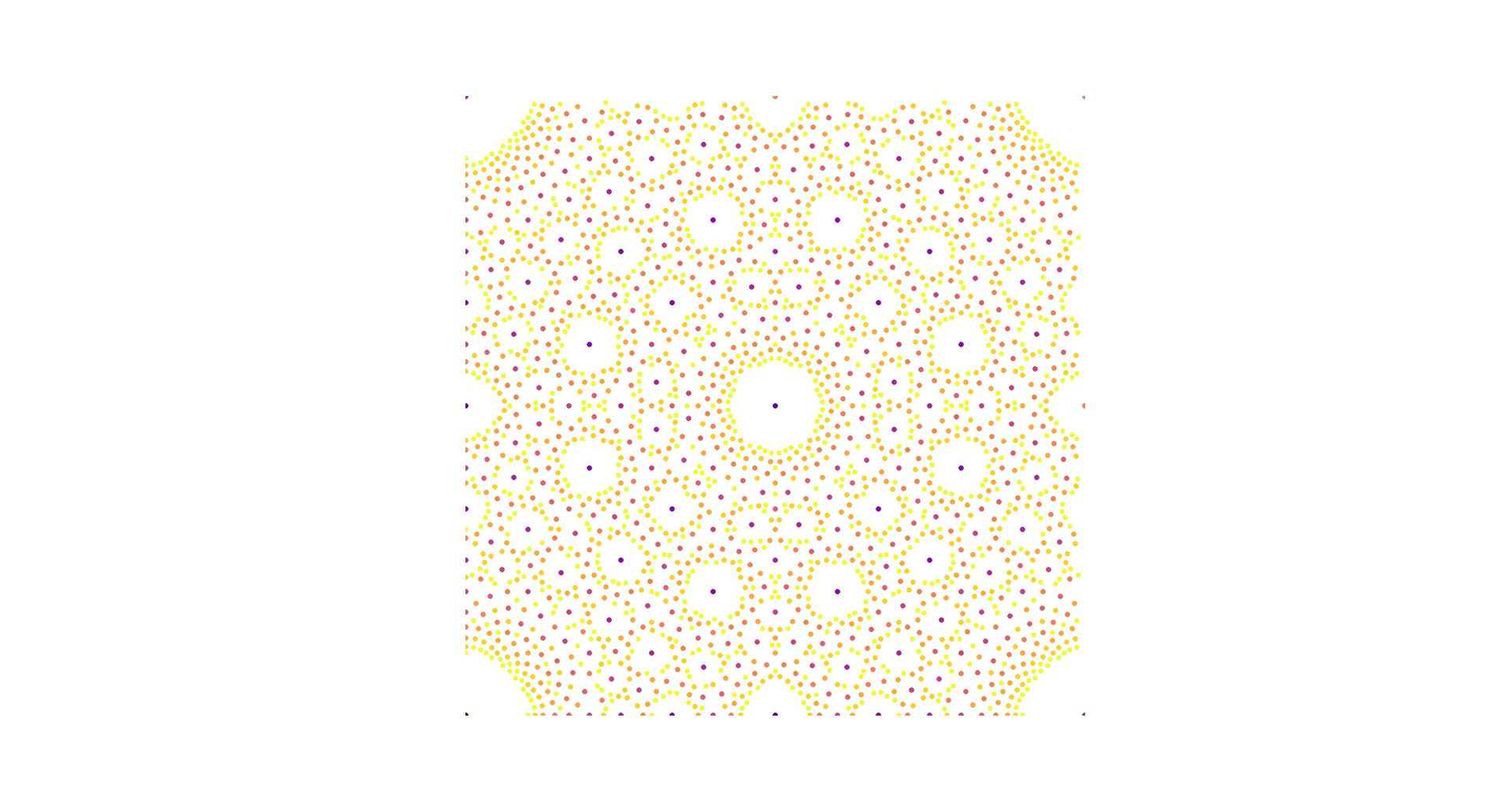}
		\end{adjustbox}
		\hspace{5mm}
		\begin{adjustbox}{clip, trim=5cm 0.85cm 4.55cm 1.cm, max width=\textwidth}
			\includegraphics{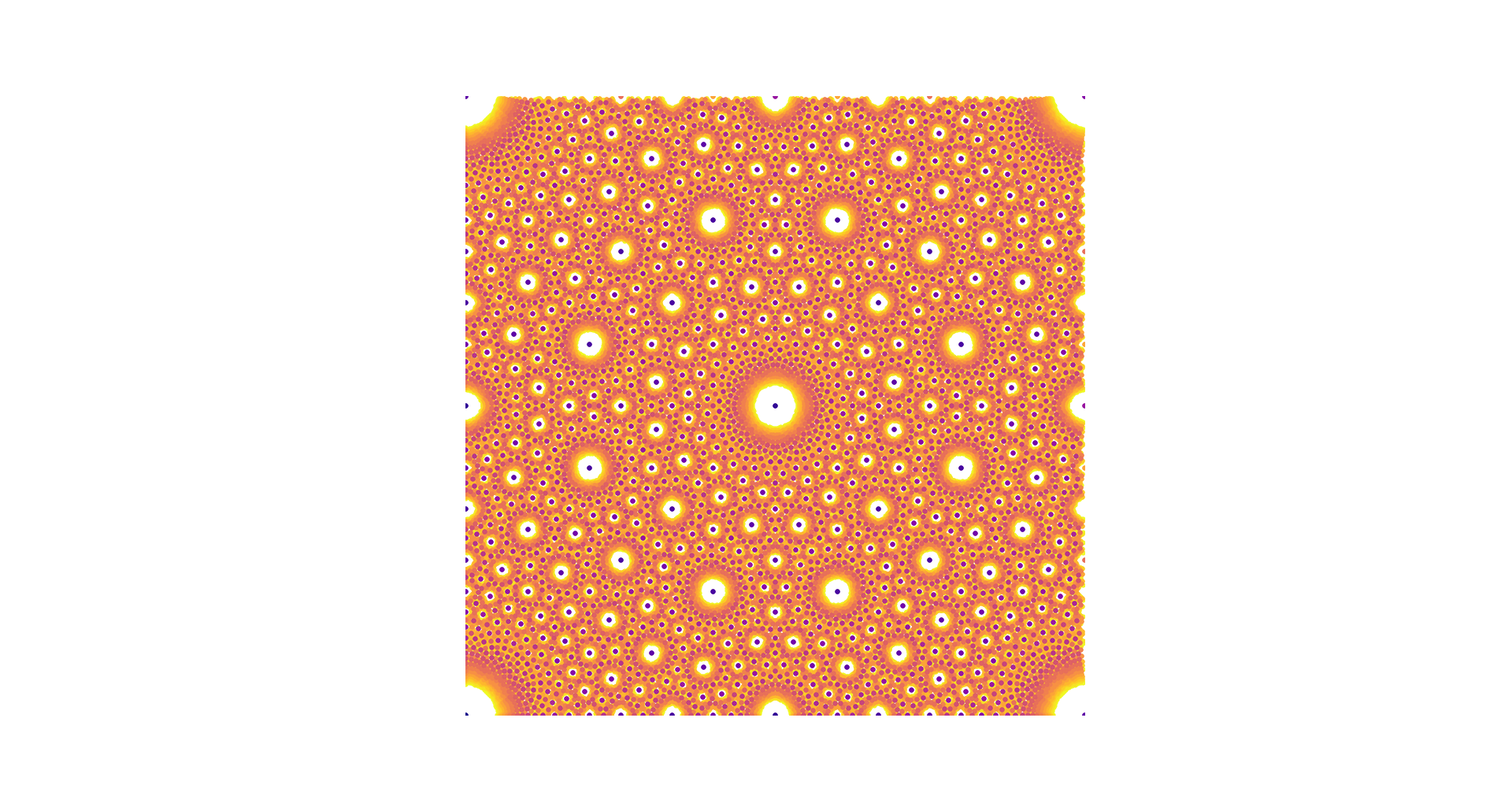}
		\end{adjustbox}
	}
	\caption{Gaussian fractions with height at most $10$ (on the left) and $20$ (on the right) in the square torus $\CC/\ZZ[i]$, and colours of points depending on their height.}
	\label{fig:gauss_frac}
\end{figure}

Let $d$ denote the quotient distance on $\CC/\ZZ[i]$. The \emph{gap statistic} of $(\G_T)_T$ is the asymptotic study of the probability measures
$$
\mu_T = \frac{1}{\card \, \G_T} \sum_{r \in \G_T} \Delta_{T^2 d(r, \G_T\ssm\{r\})}.
$$ 
The terminology "gap" is chosen here instead of "distance to the nearest neighbour" in order to emphasize the similarities between our main result below and the gap (or 'distance to the nearest neighbour to the right") statistic in the classical real case. In the latter formula, the scaling factor $T^2$ is chosen, a priori, in order to obtain a macroscopic rescaled average gap. Indeed, at each time $T \geq 1$, the gaps $\delta_r = d(r, \G_T \ssm \{r\}) >0$ give rise to a family $({\rm D}(r,\frac{\delta_r}{2}))_{r \in \G_t}$ of pairwise disjoint open disks in the square torus $\CC/\ZZ[i]$ (some of which are tangent to each other). We expect the average gap to be comparable to the one we compute in the case of if this family of disks covers the whole torus, namely to be comparable to $2 \sqrt{\frac{1}{\pi \card \, \G_T}} \sim \frac{2 \sqrt{2}}{\pi} \sqrt{\zeta_{\QQ(i)}(2)} \frac{1}{T^2}$ (since $\card \, \G_T \sim \frac{\pi}{2 \zeta_{\QQ(i)}(2)} T^4$ by a generalisation of Mertens' formula \cite[Satz~2]{grotz1979mertenslike}) with $\zeta_{\QQ(i)}$ denoting the Dedekind zeta function of $\QQ(i)$. In other words, we expect the disks ${\rm D}(r,\frac{\delta_r}{2})$ to be close to the Voronoï cells in the tessellation of $\G_T$. And, a posteriori, it is an arguably good choice since there is no loss of mass at infinity in the convergence stated in Theorem \ref{th:intro} below. We notice the inclusion $\supp{ \mu_T} \subset [1,+\infty[$ (since the systole of $\ZZ[i]$ is equal to $1$).

The following result is a description of the asymptotic cumulative distribution of gaps for Gaussian fractions. A version for any imaginary quadratic field is given in our main results Theorem \ref{th:existence_density} and Corollary \ref{cor:tail}.
\begin{theorem}\label{th:intro}
	There exists a probability measure $\mu$ on $[0,+\infty[\,$, absolutely continuous with respect to the Lebesgue measure, such that we have the vague convergence of measures, as $T \to +\infty$,
	$$
	\mu_T \weakstar \mu \, .
	$$
	Furthermore, we have:
	\begin{itemize}
		\item[$\bullet$] the equality of support $\supp{\mu} = [1,+\infty[\,$ ;
		\item[$\bullet$] a description of the cumulative distribution of $\mu$: with $A(z,r,R) \subset \CC$ denoting the closed annulus centred at $z \in \CC$ and of interior and exterior radii $r >0$ and $R>0$ (empty if $R<r$), we have the formula, for every $\delta >0$,
		$$
		\mu([0,\delta]) = 2 \int_{s=0}^{+\infty} dx\Big( \pr \Big( \bigcup_{\substack{p,q \in \ZZ[i] \\ p \neq 0}} A\Big( \frac{q}{p}, \frac{e^s}{\delta}, \frac{e^\frac{s}{2}}{|p|} \Big) \Big) \Big) e^{-2s} \, ds \, ;
		$$
		\item[$\bullet$] a tail estimate for $\mu$ given by, as $\delta \to +\infty$,
		$$
		\mu(\,]\delta,+\infty[\,) = \frac{1}{\delta^4} + \bigO\big(\frac{1}{\delta^5}\big).
		$$
	\end{itemize}
\end{theorem}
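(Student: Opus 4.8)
Here is how I would attack Theorem \ref{th:intro}.

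\emph{Strategy and reformulation.} The plan is to transfer the fine‑scale statistics of the Gaussian Farey sequence to equidistribution in a homogeneous space à la Marklof \cite{marklof2013finescalestatsfarey}, with $G=\PSL_2(\CC)=\Isom^+(\HH^3_\RR)$ and lattice $\Gamma=\PSL_2(\ZZ[i])$ (the unit tangent bundle of the Bianchi orbifold), and to feed it the joint equidistribution theorem of Parkkonen and Paulin \cite{parkkonenpaulin2023jointfarey}. A reduced fraction $r=\pr(p/q)\in\G_T$ (so $\gcd(p,q)=1$, $0<|q|\le T$) equals $\gamma_r\cdot\infty$ for some $\gamma_r=\left(\begin{smallmatrix}p&\ast\\q&\ast\end{smallmatrix}\right)\in\SL_2(\ZZ[i])$, and its Ford ball in $\HH^3_\RR$ has Euclidean diameter $|q|^{-2}$. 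Conjugating by $\gamma_r$ and recentring at $r$, the rescaled configuration $\{\,T^2(r'-r):r'\in\G_T\ssm\{r\}\,\}$ becomes an explicit function of two parameters attached to $r$: the height $s=s_T(r):=\log\!\big(T^2/|q|^2\big)\in[0,2\log T]$ (the geodesic‑flow time needed to blow the Ford ball up to unit size) and a "dual" torus point $x=x_T(r)\in\CC/\ZZ[i]$ built from $q$ and the residue of $p^{-1}$ modulo $q$ (together with a circle‑valued parameter that will not affect the gap). Indeed, with the cocycle $c:=p'q-pq'$ the configuration is $\big\{\,c\,w^2/z:c\in\ZZ[i]\ssm\{0\},\ z\in(-c\,x)+\ZZ[i],\ |z|\le e^{s/2}\,\big\}$ with $|w|^2=e^s$, so for $T$ large the event ``$r$ has a neighbour in $\G_T$ at distance $\le\delta/T^2$'' depends only on $(s,x)$ and is exactly ``$x\in\pr\big(\bigcup_{p\neq0,\,q}A(q/p,\,e^{s}/\delta,\,e^{s/2}/|p|)\big)$''.

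\emph{Equidistribution and the limit measure (the crux).} One must then show that, as $T\to+\infty$, the pairs $(s_T(r),x_T(r))$ — and the full local configurations above — equidistribute, under the normalised counting measure on $\G_T$, towards $2e^{-2s}\,ds\otimes dx$ on $[0,+\infty[\,\times(\CC/\ZZ[i])$ and the corresponding random configuration. The $x$‑marginal is uniform by equidistribution of expanding horospheres in $\Gamma\backslash G$; the $s$‑marginal is forced by the Mertens‑type count $\card\{r\in\G_T:|q|\le\rho\}\sim c\,\rho^4$ \cite{grotz1979mertenslike}, which makes $|q|/T$ converge in law to the density $4u^3$ on $[0,1]$, hence $s$ to the density $2e^{-2s}$ on $[0,+\infty[\,$; and the joint statement, plus the convergence of local configurations needed to treat the union over $q/p$ and the existence‑of‑a‑neighbour event, follows by adapting \cite{parkkonenpaulin2023jointfarey} to this weighted orbital count, the delicate points being the choice of observable (the number of $\Gamma$‑translates of $\infty$ in a shrinking window around $r$) and a tail bound $\mu_T(\,]\delta,+\infty[\,)\le C\delta^{-4}$ uniform in $T$, which rules out escape of mass and upgrades vague to weak convergence. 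This is the main obstacle; everything else is a consequence of the resulting formula. It gives the probability measure $\mu$ with $\mu_T\weakstar\mu$, and integrating the event of the first paragraph (a $\mu$‑continuity event) against $2e^{-2s}\,ds\otimes dx$ yields the stated formula for $\mu([0,\delta])$. Since the integrand $dx\big(\pr\big(\bigcup_{p\neq0,q}A(q/p,e^s/\delta,e^{s/2}/|p|)\big)\big)$ depends continuously, monotonically and in a locally Lipschitz way on $\delta$, this exhibits $\mu([0,\delta])$ as an absolutely continuous function of $\delta$, so $\mu$ is absolutely continuous.

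\emph{Support.} In the formula, $A(q/p,e^s/\delta,e^{s/2}/|p|)$ is empty unless $\delta\ge|p|\,e^{s/2}\ge e^{s/2}\ge1$, so $\mu([0,\delta])=0$ for $\delta<1$ and $\supp\mu\subset[1,+\infty[\,$; conversely $\delta\mapsto\mu([0,\delta])$ is strictly increasing on $[1,+\infty[\,$ since increasing $\delta$ strictly enlarges a family of annuli of positive $dx$‑measure and never shrinks $\pr(\bigcup\cdots)$, whence $\supp\mu=[1,+\infty[\,$.

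\emph{Tail estimate.} Here I would argue geometrically from the complementary form
$$\mu(\,]\delta,+\infty[\,)=2\int_{0}^{+\infty}dx\Big(\big(\CC/\ZZ[i]\big)\ssm\pr\Big(\bigcup_{\substack{p,q\in\ZZ[i]\\p\neq0}}A\Big(\tfrac{q}{p},\tfrac{e^{s}}{\delta},\tfrac{e^{s/2}}{|p|}\Big)\Big)\Big)\,e^{-2s}\,ds.$$
If $s>2\log\delta$ then $e^{s/2}>\delta$, so $\delta\ge|p|e^{s/2}$ fails for every $p$, every annulus is empty, and the integrand equals $1$; this range contributes exactly $2\int_{2\log\delta}^{+\infty}e^{-2s}\,ds=\delta^{-4}$. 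If $s\le2\log\delta$, it already suffices to use the single annulus $A(0,e^s/\delta,e^{s/2})$ coming from $p=\pm1$: once its radial width $e^{s/2}-e^s/\delta$ is at least $1$, every point of $\CC$ has a $\ZZ[i]$‑translate of modulus in $[e^s/\delta,e^{s/2}]$ (along $k\mapsto|x+k|$, $k\in\ZZ_{\ge0}$, consecutive values differ by at most $1$ and tend to $+\infty$), so this annulus alone projects onto the whole torus and the integrand vanishes; one checks this width is $\ge1$ whenever $s\le2\log\delta-c\,\delta^{-1}$ for a suitable $c>0$ (for small $s$ one uses instead a nearest‑unit‑translate argument). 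Hence the integrand is nonzero only for $s$ in an interval of length $\bigO(\delta^{-1})$ abutting $2\log\delta$, where $e^{-2s}=\bigO(\delta^{-4})$ and the integrand is $\le1$; that range contributes $\bigO(\delta^{-5})$, and altogether $\mu(\,]\delta,+\infty[\,)=\delta^{-4}+\bigO(\delta^{-5})$.
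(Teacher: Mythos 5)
You follow essentially the paper's route (Marklof-style fine-scale tail functions fed by the Parkkonen--Paulin joint equidistribution), and your finite-$T$ reformulation, the annuli formula, and the tail computation coincide with the paper's: the ``radial width $\geq 1$ implies the annulus covers the torus'' argument, the exact contribution $\delta^{-4}$ from $s\geq 2\ln\delta$, and the transition window of length $\bigO(\delta^{-1})$ contributing $\bigO(\delta^{-5})$ are precisely Lemma \ref{lem:begin_study_fsdelta} and Corollary \ref{cor:tail} specialised to $\ZZ[i]$. The genuine gap is the step you yourself flag as ``the main obstacle'': adapting \cite[Cor.~4.2]{parkkonenpaulin2023jointfarey} is not just a weighted-count bookkeeping issue, and two concrete points are left undone. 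First, that result equidistributes the set $\F_t'$, i.e.\ the Farey fractions modulo the action of $\OOO_K^{\,\prime}=\OOO_K^\times\ltimes\OOO_K$, and for $K=\QQ(i)$ this is generically a $2$-to-$1$ quotient of $\CC/\ZZ[i]$ (units act by $z\mapsto \pm z+b$); since your statistics $(s_T(r),x_T(r))$ live on the torus, you must unfold the statement, which the paper does via an explicit decomposition of $(\CC/\ZZ[i])\ssm\X$ into open pieces homeomorphic to $\OOO_K^{\,\prime}\backslash\CC$ minus a negligible set, together with the cardinality correction $\card\F_t'\sim\frac{2}{|\OOO_K^\times|}\card\F_t$ (Corollary \ref{cor:adapted_PaPa23}); your outline never mentions this, although it is an issue precisely in the Gaussian (and Eisenstein) case you treat. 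Second, to evaluate the equidistribution on the indicator of your neighbour event you must prove it is a continuity set of the limit measure; calling it ``a $\mu$-continuity event'' is an assertion, whereas the paper's Lemma \ref{lem:boundary_Ck} proves it by describing $\partial\CCC(\A)$ and checking that the bad set of $z$ is a countable union of circles and annulus boundaries, hence Lebesgue-negligible.

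Three smaller remarks. The uniform-in-$T$ bound $\mu_T(\,]\delta,+\infty[\,)\leq C\delta^{-4}$ that you invoke to rule out escape of mass is both unproven and unnecessary: once $\mu_T([0,\delta])\to F(\delta)$ for every $\delta$ and $F$ is shown to be the distribution function of a probability measure (the paper gets $F(\delta)\to 1$ from the complex Dirichlet approximation theorem), narrow convergence follows without any tightness input. Your support argument (``strictly enlarges \dots never shrinks'') does not by itself give strict growth of $F$, since the union of annuli may already cover the torus; it is easily repaired by noting that for $1\leq\delta_1<\delta_2$ and $s\in\,]2\ln\delta_1,2\ln\delta_2[$ every annulus is empty for $\delta_1$, while for $\delta_2$ the $|p|=1$ annulus has positive measure and, for $s$ close to $2\ln\delta_2$, does not cover the torus. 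Finally, the claimed local Lipschitz dependence of the integrand on $\delta$ needs the uniform finiteness of the union of annuli (the paper's set $S_M$ and Fubini argument); this is routine but should be said.
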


\begin{figure}[ht]
	\centering
	\scalebox{0.98}{
		\begin{adjustbox}{clip, trim=2cm 0.85cm 2cm 1.35cm, max width=\textwidth}
			\includegraphics{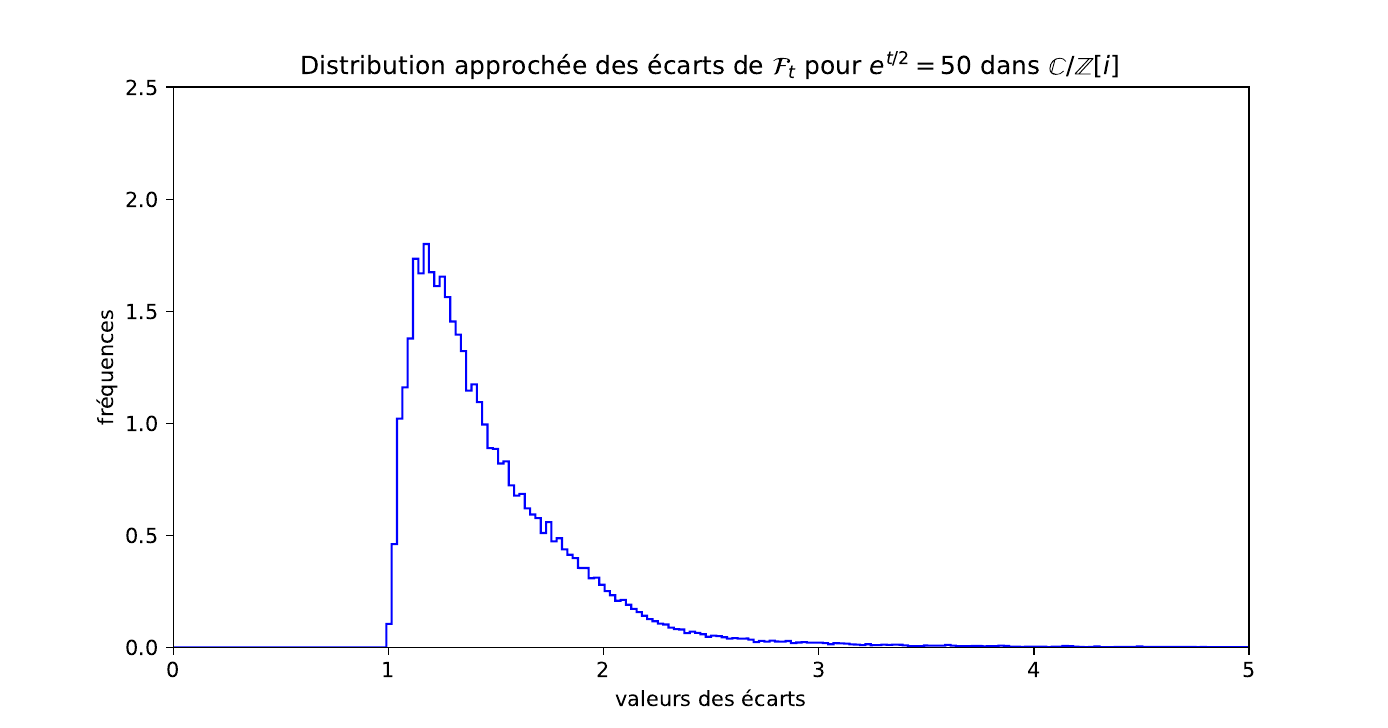}
		\end{adjustbox}
	}
	\caption{A numerical approximation of the asymptotic gap density for Gaussian fractions, using points with height at most $50$. For the approached tail distribution function, see the top graph in Figure \ref{fig:tail_approx_zi}.}
	\label{fig:gauss_frac_density}
\end{figure}

The density of $\mu$, empirically approached in Figure \ref{fig:gauss_frac_density}, is similar to the one found by R.~R.~Hall for gaps in real Farey fractions \cite{hall1970fareyfraction} (see \cite[Fig.~3]{athreya2016gaps}). Gap densities seemingly close to the Hall density have already been found in other settings, e.g.~studying the gaps between tangency points along a line in a circle packing \cite[Fig.~3]{rudnickzhang2017gapcirclepacking} (where the Hall distribution is found by studying gaps for the classical Apollonian packing).

The proof we present of Theorem \ref{th:intro} (and more generally Theorem \ref{th:existence_density} and Corollary \ref{cor:tail}) will follow the approach of J.~Marklof using fine-scale tail functions \cite{marklof2013finescalestatsfarey} and will rely on a lifted version of the joint equidistribution result of J.~Parkkonen and F.~Paulin for complex Farey fractions placed on a horosphere in the $3$-dimensional hyperbolic space $\HH^3_\RR$ when pushed by the geodesic flow \cite[Cor.~4.2]{parkkonenpaulin2023jointfarey}.

We conclude this introduction by mentioning that the result of Theorem \ref{th:intro} still holds, with same limit $\mu$ if we only take into account Gaussian fractions $r \in \G_T$ falling in a fixed Borel set $\B$ which has nonzero measure and neglibible boundary, as we could expect from the self-similarity observed in Figure \ref{fig:gauss_frac}. This fact is given in Theorem \ref{th:existence_density}.

\medskip
\noindent{\small {\it Acknowledgments:} The author would like to thank J.~Parkkonen and F.~Paulin, the supervisors of his ongoing doctorate, for their support through discussion, suggestions and corrections during this research.}

\section{Existence of a density for the gap statistics}\label{sec:density_gap}
Let $K$ be an imaginary quadratic number field in $\CC$, with ring of integers $\OOO_K$, discriminant $D_K$ and Dedekind zeta function $\zeta_K$. Let $\pr_{\OOO_K} : \CC \to \CC/\OOO_K$ denote the canonical projection on the torus. The \emph{complex Farey fractions of height at most} $e^{\frac{t}{2}}$ are defined as the elements of the set
$$
\F_t = \Big\{ \pr_{\OOO_K}\Big(\frac{p}{q}\Big) \, : \, p,q \in \OOO_K, \, p\OOO_K + q\OOO_K = \OOO_K, \, 0 < |q| \leq e^{\frac{t}{2}} \Big\} \subset \CC / \OOO_K.
$$

\begin{figure}[ht]
\centering
\scalebox{1.1}{
	\begin{adjustbox}{clip, trim=2.7cm 1.2cm 2.4cm 1.3cm, max width=\textwidth}
		\includegraphics{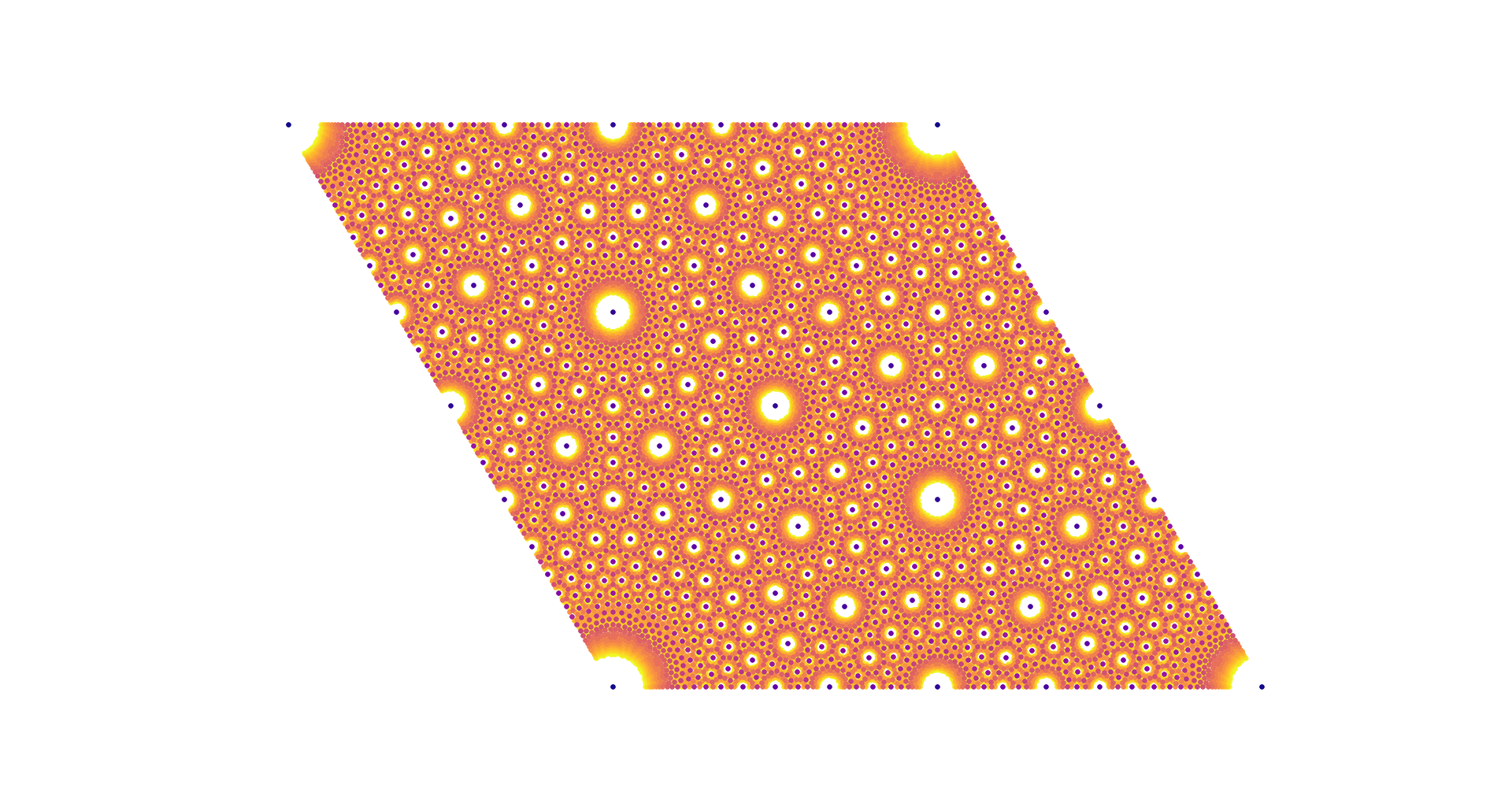}
	\end{adjustbox}
}
\caption{Points of $\F_t$ with $K=\QQ(i\sqrt{3})$ (\emph{Eisenstein fractions}) hence $\OOO_K=\ZZ[e^{i\frac{2\pi}{3}}]$, with height at most $20$ and colours of points depending on their height.}
\label{fig:eis_frac}
\end{figure}

The family $(\F_t)_{t\geq 0}$ equidistributes towards the Haar measure $dx$ on $\CC/\OOO_K$ (of total mass $\covol_{\OOO_K} = \frac{\sqrt{|D_K|}}{2}$) in the following sense: as $t \to \infty$, we have the vague convergence of Borel probability measures on $\CC/\OOO_K$ (see \cite[Satz~2]{grotz1979mertenslike})
$$
\frac{1}{\card \, \F_t} \sum_{r \in \F_t} \Delta_{r} \weakstar \frac{2}{\sqrt{|D_K|}} \; dx.
$$
As $t \to \infty$, Mertens formula yields the estimation (see for instance \cite[Th.~4]{cosentino1999equidibparabpts})
$$
\card \, \F_t \sim c_K e^{2t} \mbox{ where } c_K = \frac{\pi}{\sqrt{|D_K|} \, \zeta_K(2) }.
$$
We are interested in the statistics of the gaps in $(\F_t)_{t \geq 0}$ defined as the distance to the nearest neighbour. More precisely, denoting by $d$ the quotient distance on $\CC/\OOO_K$ obtained from the Euclidean distance on $\CC$, we study the \emph{empirical gap measures of $(\F_t)_{t\geq 0}$} defined as the family of probability measures with general term
{\large
$$
\mu_t = \frac{1}{\card \, \F_t} \sum_{r \in \F_t} \Delta_{e^t d(r, \, \F_t \ssm \{r\})}.
$$
}Therefore, for each real number $t \geq 0$, the measure $\mu_t$ counts "gaps" in the following way: a \emph{gap} around each point $r \in \F_t$ is defined as the maximal radius $\delta_r >0$ for which the open disk ${\rm D}(r,\delta_r) \subset \CC/\OOO_K$ only meets $\F_t$ at its centre $r$. The factor $e^t$ is an adapted scaling parameter. As explained in the introduction, assuming these gaps are somehow close the ones we would obtain using Voronoï tessellations, we expect the average gap to be comparable to $2 \sqrt{\frac{\covol_{\OOO_K}}{\card \, \F_t}} \sim \big(2\frac{\sqrt{|D_K|}}{\pi \, c_K}\big)^{\frac{1}{2}} e^{-t}$.

Inspired by the work of J.~Marklof on the case of the classical Farey fractions in \cite{marklof2013finescalestatsfarey}, our focus lies on the tail distribution function for $(\mu_t)_{t\geq 0}$, defined as follows.

\begin{defi}{\rm
	The \emph{fine-scale tail distribution function for the gaps of $(\F_t)_{t \geq 0}$} is defined, for every integer $k \in \NN$, for all Borel subsets $\A \subset \CC$ and $\B \subset \CC/\OOO_K$ such that $\B$ is nonnegligible and has negligible boundary (i.e.~$dx(\B) >0$ and $dx(\partial \B)=0$), and for every real number $t \geq 0$ large enough so that $\F_t \cap \B \neq \emptyset$, by the formula
	$$
	F_t(k,\A,\B) = \frac{\card \big\{ r \in \F_t \cap \B \, : \, \card{(\F_t \cap (r+\pr_{\OOO_K}(e^{-t}\A)))}=k \big\}}{\card(\F_t \cap \B)}.
	$$
}
\end{defi}
We use the parameter $\B$ in order to focus only on a portion of all the complex Farey fractions. For every time $t\geq 0$ and every Farey fraction $r\in\F_t$, notice that the rescaled gap $e^{t}d(r,\F_t\ssm\{r\})$ is bigger than $\delta$ if, and only if, the set $\F_t$ meets the closed ball $r+\pr_{\OOO_K}(\bar{D}(0,e^{-t} \delta))$ only at its center $r$. Hence, the function $F_t$ describes the tail distribution of $\mu_t$ with the formula
$$
\mu_t(\, ]\delta, +\infty[ \,) = F_t(1,\overline{\rm D}(0,\delta),\CC/\OOO_K).
$$
We define a conditional version of the gap measures $\mu_t$, for $t$ large enough so that $\F_t \cap \B \neq \emptyset$, by
$$
\mu_{t,\B} = \frac{1}{\card(\F_t \cap \B)} \sum_{r \in \F_t \cap \B}\Delta_{e^t d(r, \, \F_t \ssm \{r\})}.
$$
Note that $\mu_{t,\B}$ is a probability measure of tail distribution function given by
\begin{equation}\label{eq:link_Ft_mutB}
	\mu_{t,\B}(\, ]\delta, +\infty[ \,) = F_t(1,\overline{\rm D}(0,\delta),\B).
\end{equation}

\subsection{Dynamical description of the cumulative gaps distribution}
We work with the group $\SL_2(\CC)$. For every complex number $z \in \CC$ and all real numbers $t, \theta \in \RR$, we set
$$
h(z) = \begin{pmatrix} 1 & z \\ 0 & 1 \end{pmatrix}, \; a(t) = \begin{pmatrix} e^{-\frac{t}{2}} & 0 \\ 0 & e^{\frac{t}{2}} \end{pmatrix} \mbox{ and } m(\theta) = \begin{pmatrix} e^{-\frac{i\theta}{2}} & 0 \\ 0 & e^{\frac{i \theta}{2}} \end{pmatrix}.
$$
These matrices are designed so that the formula
\begin{equation}\label{eq:matrix_comput_dynamical_descr}
	m(\theta)a(t)h(z) \begin{pmatrix} p \\ q \end{pmatrix} =  \begin{pmatrix} (p+qz)e^{- \frac{t+i\theta}{2}} \\ qe^{\frac{t+i\theta}{2}} \end{pmatrix} 
\end{equation}
holds for all $p,q,z\in \CC$ and $t,\theta \in \RR$. Let $\A$ be a Borel subset of $\CC$. Identifying the elements of $\CC^2$ with their column matrices, we define the \emph{cone of $\A$} by
\begin{align*}
\CCC(\A) & = \Big\{ \begin{pmatrix} z \\ u \end{pmatrix} \in \CC \times (\overline{\rm D}(0,1)\ssm \{0\}) \, : \, z \in u \A \Big\} \subset \CC^2\\
& = \bigcup_{u\in\overline{\rm D}(0,1)\ssm \{0\}} (u\,\A) \times \{u\}.
\end{align*}
By Equation \eqref{eq:matrix_comput_dynamical_descr}, for all complex numbers $p,q,z \in \CC$ and every $t \in \RR$, we have the equivalence
\begin{equation}\label{eq:equiv_dyn_description}
	a(-t)h(-z) \begin{pmatrix} p \\ q \end{pmatrix} \in \CCC(\A) \iff \frac{p}{q} \in z + e^{-t}\A \mbox{ and } 0 < |q| \leq e^{\frac{t}{2}}.
\end{equation}
We denote by $\widehat{\OOO_K^{\; 2}}$ the set of vectors in $\OOO_K^{\; 2}$ with coprime components $p,q$ (i.e.~they satisfy $p \OOO_K + q\OOO_K = \OOO_K$). Notice that, for each $z=\tilde{z}+\OOO_K \in \CC/\OOO_K$, the set $h(-z) \widehat{\OOO_K^{\; 2}}=h(-\tilde{z}) \widehat{\OOO_K^{\; 2}}$ is well defined since $h$ is a group morphism and $\SL_2(\OOO_K)$ preserves $\widehat{\OOO_K^{\; 2}}$. Moreover, notice that a fraction $r \in \bigcup_{t\geq 0} \F_t$ (which is equal to $K$ if and only if $\OOO_K$ is principal) determines a vector in $\widehat{\OOO_K^{\; 2}}$ up to multiplication by an element of $\OOO_K^\times$. This remark and the equivalence \eqref{eq:equiv_dyn_description} grant us the following formula for the cumulative distribution function: for every integer $k \in \NN$, every bounded Borel subset $\A$ of $\CC$, every nonnegligible Borel subset $\B$ of $\CC/\OOO_K$, every real number $t \geq 0$ large enough so that $\pr_{\OOO_K}$ is injective on $e^{-t}\A$ and such that $\F_t \cap \B \neq \emptyset$, we have
\begin{equation}\label{eq:dyn_descr_Ft}
	F_t(k,\A,\B) = \frac{\card \big\{ r \in \F_t \cap \B \, : \, \card{(\CCC(\A) \cap a(-t)h(-r) \widehat{\OOO_K^{\; 2}})}=k |\OOO_K^\times| \big\}}{\card(\F_t \cap \B)}.
\end{equation}

\subsection{The asymptotics of gaps via a joint equidistribution result from homogeneous dynamics}\label{ssec:homogeneous dynamic}
Our main tool in this Section is the vague convergence result \cite[Cor.~4.2]{parkkonenpaulin2023jointfarey} for complex Farey fractions. In order to state it, we work in $G = \PSL_2(\CC)$ and denote by $\Gamma = \PSL_2(\OOO_K)$ the \emph{Bianchi group}. For a matrix $g \in \SL_2(\CC)$, we denote by $[g]=\{\pm g\}$ its projection in $G$. Define $H = [h(\CC)]$ and $M=[m(\RR)]$ which are abelian subgroups of $G$, and let $A=[a(\RR)]$ be the Cartan subgroup of $G$. We set $\Gamma_H = N_G(H)\cap\Gamma$ which can be described as followed
$$
\Gamma_H = (H \cap \Gamma) (M \cap \Gamma)= \Big\{\begin{bmatrix} a & b \\ 0 & a^{-1} \end{bmatrix} \, : \, a \in \OOO_K^\times, b \in \OOO_K\Big\}.
$$
We recall that an action $*$ of $\Gamma_H$ \emph{on the right} on $H$ is defined (from $\Gamma_H \times H$ to $H$) so that we have, for all $h \in H$ and $\gamma_1, \gamma_2 \in \Gamma_H$, the equalities $(h*\gamma_1)*\gamma_2 = h*(\gamma_1 \gamma_2)$ and $h*[I]=h$. As noticed in \cite[§~4.2]{parkkonenpaulin2023jointfarey}, the group $\Gamma_H$ acts cocompactly on the right on $H$ by the formula
$$
h' * (hm) = m^{-1} h' h m \; \mbox{ for all } h' \in H, \, h \in H \cap \Gamma \mbox{ and } m \in M \cap \Gamma.
$$
We endow the orbit space $H/\Gamma_H$ with the measure $\mu_{H/\Gamma_H}$ induced by the Haar measure $dz$ on $H$ by the branched cover $H \to H/\Gamma_H$ and normalised to be a probability measure. By the canonical identification $H/\Gamma_H = (M\cap\Gamma) \bs H / (\Gamma \cap H)$, we see $H/\Gamma_H$ as included in $M \bs G / \Gamma$. Similarly, we canonically identify $\Gamma_H \bs H$ with $(\Gamma \cap H) \bs H / (M \cap \Gamma) \subset \Gamma \bs G / M$ and we have the induced probability measure $\mu_{\Gamma_H \bs H}$ (the result \cite[Cor.~4.2]{parkkonenpaulin2023jointfarey} is stated in this setting). We have the formula $\mu_{H/\Gamma_H} = \iota _* \mu_{\Gamma_H \bs H}$ where $\iota : \Gamma g M \mapsto M g^{-1} \Gamma$ is the inversion map. We denote by $\Theta$ (resp.~$\Theta'$) the Cartan involution $M g \Gamma \mapsto M \transp{g}^{-1} \Gamma$ on $M \bs G / \Gamma$ (resp.~$\Gamma g M \mapsto \Gamma \transp{g}^{-1} M$ on $\Gamma \bs G / M$).

The definition of Farey fractions used in \cite{parkkonenpaulin2023jointfarey} is different from ours. We set $\OOO_K^{\, \prime}= \OOO_K^{\times} \ltimes \OOO_K$ with law $(a,b)(c,d) = (ac, ad+bc^{-1})$, acting on the left on $\CC$ by $((a,b),z) \mapsto a^2 z + ab$ with kernel $\{\pm (1,0)\}$. For every real number $t$, the map $z \mapsto M [a(-t)h(z)] \Gamma$ is constant on every $\OOO_K^{\, \prime}$-orbit thanks to the formula, for all $z \in \CC$ and $(a,b) \in \OOO_K^{\, \prime}$,
$$
[h(a^2z + ab)] =  \begin{bmatrix} 1 & a^2z+ab \\ 0 & 1 \end{bmatrix} = \begin{bmatrix} a & 1 \\ 0 & a^{-1} \end{bmatrix} \Big( \begin{bmatrix} 1 & z \\ 0 & 1 \end{bmatrix} \begin{bmatrix} 1 & a^{-1}b \\ 0 & 1 \end{bmatrix} \Big)\begin{bmatrix} a & 1 \\ 0 & a^{-1} \end{bmatrix}^{-1}
$$
and since $[a(-t)]$ is in the centralizer of $M$. We set, for all $t \geq 0$,
$$
\F_t' = \OOO_K^{\, \prime} \bs \big\{ \frac{p}{q} \, : \, p,q \in \OOO_K, \, p\OOO_K+q\OOO_K = \OOO_K \mbox{ and } 0 < |q| \leq e^{\frac{t}{2}} \big\} \subset \OOO_K^{\, \prime} \bs \CC.
$$
Identifying $\OOO_K$ with the subgroup ${1} \times \OOO_K$ of $\OOO_K^{\, \prime}$, we obtain a projection from $\CC/\OOO_K$ to $\OOO_K' \bs \CC$ which restricts to each set $\F_t$ onto $\F_t'$, and this restriction has fibres of order $\frac{|\OOO_K^{\; \times}|}{2}$ (except for the fibre of a finite set of points which is independent on $t$). In particular we have the estimate, as $t \to +\infty$,
\begin{equation}\label{eq:estim_card_Ft'_Ft}
	\card \, \F_t' \sim  \frac{2 \, \card \, \F_t}{|\OOO_K^\times|}.
\end{equation}
Moreover, if $D_K \neq -4, -3$, then $\OOO_K^\times = {\pm 1}$ and this canonical projection yields the identification, for every number $t\geq 0$, $\F_t' = \F_t$. Let us denote by $dx'$ the measure on $\OOO_K^{\, \prime} \bs \CC$ induced by the Lebesgue measure of $\CC$ by the branched cover $\CC \to \OOO_K^{\, \prime} \bs \CC$ and normalised to be a probability measure.

We now state a result of J.~Parkkonen and F.~Paulin about joint equidistribution of Farey fractions (in the sense of elements of $\F_t'$) and their images on an horosphere in the $3$-dimensional real hyperbolic space pushed forward by the geodesic flow. It is stated in its original version, that will need to be adapted afterwards for our purpose.

\begin{theorem}\label{th:PaPa23}{\rm ({\cite[Cor.~4.2]{parkkonenpaulin2023jointfarey}})}
	We have the vague convergence of probability measures on $(\OOO_K^{\, \prime} \bs \CC) \times 	(\Gamma \bs G / M)$, as $t \to +\infty$,
	$$
	\frac{1}{\card \, \F_t'} \sum_{r \in \F_t'} \Delta_r \otimes \Delta_{\Gamma [h(r)a(t)] M} \weakstar dx' \, \otimes \, 2 \int_{s=0}^{+\infty} \Theta'_* {[a(-s)]_* \mu_{\Gamma_H \bs H}} \; e^{-2s} \, ds.
	$$
\end{theorem}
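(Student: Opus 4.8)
The statement is quoted verbatim from \cite[Cor.~4.2]{parkkonenpaulin2023jointfarey}, so the plan is to recall the geometric mechanism behind it rather than to give a self-contained proof. The first step is to read the left-hand side in the unit tangent bundle $\Gamma\bs G/M\simeq T^1(\Gamma\bs\HH^3_\RR)$, using the upper half-space model of $\HH^3_\RR$ with boundary $\CC\cup\{\infty\}$: there $H$ is the expanding horospherical subgroup of the geodesic flow $(\Gamma xM\mapsto\Gamma x[a(s)]M)_{s\in\RR}$, the point $\Gamma[h(r)]M$ is the outward unit normal, above $r\in\CC$, to the horosphere bounding a fixed horoball $\H_\infty$ of the cusp $\infty$ --- normalised so that its $\Gamma$-translate based at $\frac{p}{q}$ has Euclidean diameter $|q|^{-2}$, and with stabiliser $\Gamma_H$ in $\Gamma$ --- and by \eqref{eq:matrix_comput_dynamical_descr} the flowed point $\Gamma[h(r)a(t)]M$ is the image of the point of $\HH^3_\RR$ of horizontal coordinate $r$ and height $e^{-t}$. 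When $r=\frac{p}{q}$ with $(p,q)$ coprime, that point lies inside the $\Gamma$-translate $\H_{p/q}$ of $\H_\infty$ based at $\frac{p}{q}$ exactly when $0<|q|\le e^{\frac t2}$, at depth $s:=t-2\log|q|\ge0$ beyond the bounding horosphere of $\H_{p/q}$. Hence, modulo the identification collapsing $\OOO_K^{\,\prime}$ (hence essentially $\Gamma_H$) discussed above, $\F_t'$ is in bijection with the common perpendiculars of length $\le t$ from $\H_\infty$ to its $\Gamma$-translates --- equivalently, from the cusp $\infty$ of $\Gamma\bs\HH^3_\RR$ to itself --- a perpendicular of length $\ell=2\log|q|$ corresponding to the geodesic arc of length $t$ that prolongs it by the excess $s=t-\ell\ge0$, and recorded together with its foot on the target horosphere.

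The weight $2\,e^{-2s}\,ds$ is then the limiting law of this excess length. Among the $\card\,\F_t'\sim\frac{2}{|\OOO_K^\times|}\,c_K\,e^{2t}$ arcs (by \eqref{eq:estim_card_Ft'_Ft} and Mertens formula), those of excess length in $[s,s+ds]$ are exactly those with $|q|$ in a thin annulus of outer radius $e^{(t-s)/2}$; since $\card\{\frac{p}{q}\in\F_t:|q|\le R\}\sim c_K\,R^4$, their number is asymptotic to $2\,c_K\,e^{2t}e^{-2s}\,ds$, so that dividing by $\card\,\F_t'$ gives the density $2\,e^{-2s}$ on $[0,+\infty[\,$, of total mass $1$. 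Equivalently, $e^{-2s}$ is the Jacobian $e^{-(n-1)s}$ of the geodesic flow contracting the $2$-dimensional cuspidal horosphere of $\HH^n_\RR$ for $n=3$, the exponent $n-1=2$ being also the critical exponent $\delta_\Gamma$ of the lattice $\Gamma$, as in the skinning-measure formalism of \cite{parkkonenpaulin2023jointfarey}.

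It then remains to establish that, at each fixed excess length $s$ and as $t\to+\infty$, the arcs of excess length close to $s$ equidistribute, so that their feet tend on $\Gamma_H\bs H$ to $\mu_{\Gamma_H\bs H}$ (the probability measure induced by the Haar measure $dz$ of $H\simeq\CC$), the associated labels $r$ tend on $\OOO_K^{\,\prime}\bs\CC$ to $dx'$, and --- the crux --- jointly the law tends to the product $dx'\otimes\mu_{\Gamma_H\bs H}$. This is the genuine dynamical content. Following the method of Roblin, Parkkonen and Paulin write the test-function-weighted count of common perpendiculars of length $\le t$ between $\H_\infty$ and its $\Gamma$-translates as an integral over lengths $\ell\le t$ of a correlation, on the finite-volume space $\Gamma\bs G$, between indicator functions of small neighbourhoods of the unit normal bundles of the two horospheres, one of them shifted by the geodesic flow for time $\ell$; the mixing of the geodesic flow $[a(\RR)]$ on $\Gamma\bs G$ (decay of matrix coefficients, Howe--Moore), made uniform by a thickening / wavefront argument, collapses each correlation to a product of masses, which delivers the skinning measure of the cuspidal horoball as the limiting distribution of the feet, while the change of variables $s=t-\ell$ turns the length integral into the $e^{-2s}$-weighted $s$-integral. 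The Cartan involution $\Theta'$ records the orientation of the arcs and the passage between the conventions $M\bs G/\Gamma$ and $\Gamma\bs G/M$ --- i.e.\ between $\mu_{H/\Gamma_H}$ and $\mu_{\Gamma_H\bs H}$ --- and the product structure reflects the asymptotic independence of the label $r$ from the skinning data, itself a consequence of mixing.

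The main obstacle is the non-compactness of $\Gamma\bs\HH^3_\RR$: one must rule out escape of mass --- no positive proportion of the arcs $\Gamma[h(r)a(t)]M$ may drift arbitrarily far into a cusp as $t\to+\infty$ --- and the thickening estimates must be made uniform near the cusps, so that the limit is a genuine \emph{probability} measure with no loss of total mass. In \cite{parkkonenpaulin2023jointfarey} this is handled through quantitative mixing together with Dani--Margulis / Kleinbock--Margulis non-divergence estimates (alternatively, via Roblin-type mixing of the Bowen--Margulis measure on the unit tangent bundle). That the limit has full mass is precisely the feature that will allow us to deduce, later in this paper, the vague convergence of the gap measures from Theorem~\ref{th:PaPa23} with no loss at infinity.
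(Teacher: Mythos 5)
The paper gives no proof of this statement---it is imported verbatim from \cite[Cor.~4.2]{parkkonenpaulin2023jointfarey}---and your proposal likewise defers to that reference while giving an accurate outline of its mechanism (Farey points as feet of common perpendiculars to the cuspidal horoball, depth $s=t-2\log|q|$, skinning measures and mixing \`a la Roblin, the change of variables producing the density $2e^{-2s}$, the Cartan involution bookkeeping), so you are in line with the paper. One cosmetic slip, harmless since the result is cited rather than proved here: in your heuristic for the weight you count fractions in $\F_t$ (via $\card\{|q|\le R\}\sim c_K R^4$) but then divide by $\card\,\F_t'$, which is off by the factor $\frac{|\OOO_K^\times|}{2}$ unless you first pass to $\F_t'$ using \eqref{eq:estim_card_Ft'_Ft}.
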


We will prove the following version, more suitable to our study.

\begin{corollary}\label{cor:adapted_PaPa23}
	We have the narrow convergence of probability measures on the space $(\CC/\OOO_K) \times 	(M \bs G / \Gamma)$, as $t \to +\infty$,
		$$
		\frac{1}{\card \, \F_t} \sum_{r \in \F_t} \Delta_r \otimes \Delta_{M [a(-t)h(-r)] \Gamma} \rightarrow \frac{2}{\sqrt{|D_K|}} dx \, \otimes \, 2 \int_{s=0}^{+\infty} \Theta_* {[a(s)]_* \mu_{H/\Gamma_H}} \; e^{-2s} \, ds.
		$$
\end{corollary}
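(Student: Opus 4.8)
The plan is to translate Theorem \ref{th:PaPa23} from the space $(\OOO_K'\bs\CC)\times(\Gamma\bs G/M)$ to the space $(\CC/\OOO_K)\times(M\bs G/\Gamma)$ by applying three operations: the inversion map $\iota$ on the dynamical factor, the finite-to-one projection relating $\F_t'$ and $\F_t$ on both factors, and an upgrade from vague to narrow convergence since both limits are probability measures on the (non-compact) target. First I would handle the dynamical factor. The map $g\mapsto[h(r)a(t)]$ versus $[a(-t)h(-r)]$ are related by matrix inversion, since $(h(r)a(t))^{-1}=a(-t)h(-r)$; thus $\Gamma[h(r)a(t)]M\mapsto M[a(-t)h(-r)]\Gamma$ is exactly the inversion map $\Gamma gM\mapsto Mg^{-1}\Gamma$ from $\Gamma\bs G/M$ to $M\bs G/\Gamma$. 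Pushing forward under $\iota$, and using the stated identity $\mu_{H/\Gamma_H}=\iota_*\mu_{\Gamma_H\bs H}$ together with the fact that $\iota$ intertwines $[a(-s)]_*$ with $[a(s)]_*$ (inversion sends $a(-s)$ to $a(s)$) and intertwines the two Cartan involutions $\Theta'$ and $\Theta$ (both defined via $g\mapsto\transp{g}^{-1}$, which commutes with inversion up to the side on which one quotients), the limit measure $2\int_0^\infty\Theta'_*[a(-s)]_*\mu_{\Gamma_H\bs H}\,e^{-2s}\,ds$ on $\Gamma\bs G/M$ is carried exactly to $2\int_0^\infty\Theta_*[a(s)]_*\mu_{H/\Gamma_H}\,e^{-2s}\,ds$ on $M\bs G/\Gamma$. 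One must check the interchange of $\iota_*$ with the $s$-integral, which is routine since $\iota$ is a homeomorphism and the integrand is a weakly-measurable family of probability measures.

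Next I would deal with the first (spatial) factor and the passage from $\F_t'$ to $\F_t$. Recall the projection $\CC/\OOO_K\to\OOO_K'\bs\CC$ restricts to a map $\F_t\to\F_t'$ with constant fibre cardinality $\frac{|\OOO_K^\times|}{2}$ away from a fixed finite set, and pushes $\frac{2}{\sqrt{|D_K|}}dx$ forward to $dx'$. Writing the empirical measure over $\F_t$ as a sum over fibres of the empirical measure over $\F_t'$: for $r\in\F_t$ with image $\bar r\in\F_t'$, the point $M[a(-t)h(-r)]\Gamma$ depends only on $\bar r$ (this is precisely the computation displayed in the excerpt showing $z\mapsto M[a(-t)h(z)]\Gamma$ is $\OOO_K'$-invariant, combined with the inversion identification). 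Hence $\frac{1}{\card\F_t}\sum_{r\in\F_t}\Delta_r\otimes\Delta_{M[a(-t)h(-r)]\Gamma}$ equals, up to the negligible exceptional fibres and the asymptotic $\card\F_t\sim\frac{|\OOO_K^\times|}{2}\card\F_t'$ from \eqref{eq:estim_card_Ft'_Ft}, the pushforward under (spatial projection)$^{-1}$-averaging of $\frac{1}{\card\F_t'}\sum_{\bar r\in\F_t'}\Delta_{\bar r}\otimes\Delta_{\Gamma[h(\bar r)a(t)]M}$ composed with $\mathrm{id}\times\iota$. Applying Theorem \ref{th:PaPa23} and the continuous-mapping / pushforward compatibility with vague convergence then yields the stated limit, with the constant $\frac{2}{\sqrt{|D_K|}}$ appearing because $dx$ here is normalised to have total mass $\covol_{\OOO_K}=\frac{\sqrt{|D_K|}}{2}$ rather than to be a probability measure. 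The finite exceptional set of fibres contributes a vanishing fraction $\bigO(e^{-2t})$ of the mass and is harmless.

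Finally, the upgrade from vague to narrow convergence: the source measures are all probability measures and the claimed limit is a probability measure (each $\Theta_*[a(s)]_*\mu_{H/\Gamma_H}$ is a probability measure and $2\int_0^\infty e^{-2s}\,ds=1$), so by the standard criterion (vague convergence plus convergence of total mass to the total mass of the limit, equivalently no escape of mass) we obtain narrow convergence. This last point is where I expect the only genuine subtlety to lie, since $M\bs G/\Gamma$ is non-compact: one needs that the original Parkkonen--Paulin statement already controls escape of mass, which it does, being a convergence of probability measures to a probability measure — so tightness is automatic and transfers through the homeomorphism $\mathrm{id}\times\iota$ and the finite-fibre averaging. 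The main obstacle is thus bookkeeping: carefully verifying that $\iota$ intertwines $\Theta'$ with $\Theta$ and $[a(-s)]_*$ with $[a(s)]_*$ and that the measure $\mu_{H/\Gamma_H}$ is the correct pushforward with the right normalisation, rather than any deep analytic input.
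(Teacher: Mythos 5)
Your overall route coincides with the paper's: push Theorem \ref{th:PaPa23} forward by $\id_{\CC/\OOO_K}\times\iota$ (using $(h(r)a(t))^{-1}=a(-t)h(-r)$ and the intertwining of $\Theta'$ and $[a(-s)]$ with $\Theta$ and $[a(s)]$, which you state correctly), then pass from $\F_t'$ to $\F_t$, then upgrade vague to narrow convergence because both sides are probability measures on a $\sigma$-compact proper metric space. The first and third steps are fine and are exactly what the paper does.

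The gap is in the middle step, and precisely in the only nontrivial cases $D_K=-4,-3$ (for $D_K\neq-4,-3$ one has $\F_t'=\F_t$ and $dx'=\frac{2}{\sqrt{|D_K|}}\,dx$, so there is nothing to prove). There you must transfer a convergence statement on the quotient $\OOO_K^{\,\prime}\bs\CC$ to one on the cover $\CC/\OOO_K$, i.e.\ \emph{lift} weak convergence through a finite branched cover. Your justification --- that the $\F_t$-empirical measure is the ``(spatial projection)$^{-1}$-averaging'' of the $\F_t'$-empirical measure, after which ``continuous-mapping / pushforward compatibility with vague convergence'' applies --- does not apply as stated: the fibrewise equal-weight lift is not the pushforward of measures under any continuous map from $(\OOO_K^{\,\prime}\bs\CC)\times(M\bs G/\Gamma)$ to $(\CC/\OOO_K)\times(M\bs G/\Gamma)$, and in general weak convergence of the projected measures does not force weak convergence upstairs, since mass could redistribute among the sheets along the family. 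What rescues the argument, and what must be said explicitly, is that the empirical measures upstairs are exactly invariant under the deck action $a\bullet\pr_{\OOO_K}(z)=\pr_{\OOO_K}(a^2z)$ of $\OOO_K^\times$ --- because $\F_t$ is $\bullet$-invariant and $z\mapsto M[a(-t)h(-z)]\Gamma$ is constant on $\bullet$-orbits --- and that the claimed limit $\frac{2}{\sqrt{|D_K|}}\,dx\otimes(\cdots)$ is the unique $\bullet$-invariant lift of $dx'\otimes(\cdots)$. Given this invariance one can conclude either by averaging test functions over the finite deck group (for $f$ continuous with compact support, $\int f\,d\mu=\int \bar f\,d\mu$ with $\bar f$ the $\bullet$-average, which descends to the quotient), or, as the paper does, by cutting $\CC/\OOO_K$ along a negligible set $\X$ into $|\OOO_K^\times|/2$ open pieces $\X_i$ on which $\pr_\bullet$ is a homeomorphism, pulling the convergence back piece by piece and summing, with the estimate \eqref{eq:estim_card_Ft'_Ft} accounting for the normalisations and the finitely many exceptional fibres. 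Either fix is short, but it is the genuine content of the corollary beyond bookkeeping, so it cannot be waved through by the continuous mapping theorem.
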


\begin{proof}
	First, notice that the class $M [a(-t)h(-z)] \Gamma$ is indeed well defined for every real number $t$ and every class $z \in \CC/\OOO_K$.
	
	Since $(\OOO_K^{\, \prime} \bs \CC) \times (M \bs G / \Gamma)$ is a $\sigma$-compact proper metric space, in order to obtain a narrow convergence result it is sufficient to prove a vague convergence result for which there is no loss of mass (this last point is immediate since both sides of this convergence are probability measures). Such an argument may also be made to immediately improve Theorem \ref{th:PaPa23} into a narrow convergence result.
	
	Pushing forward the result of Theorem \ref{th:PaPa23} by the continuous and proper map $\id_{\CC/\OOO_K} \times \, \iota : (r,\Gamma g M) \mapsto (r,M g^{-1} \Gamma)$ to reverse the order of quotients, by the formula $ \iota \circ \Theta' \circ [a(-s)] : \Gamma_H h \mapsto \Theta([a(s)] h^{-1} \Gamma_H) $ we obtain
	\begin{equation}\label{eq:conv_Ft'_rightorder}
	\frac{1}{\card \, \F_t'} \sum_{r \in \F_t'} \Delta_r \otimes \Delta_{M [a(-t)h(-r)] \Gamma} \weakstar dx' \, \otimes \, 2 \int_{s=0}^{+\infty} \Theta_* {[a(s)]_* \mu_{H/\Gamma_H}} \; e^{-2s} \, ds.
	\end{equation}
	It now remains to pass from a result on the family $(\F_t')_{t \geq 0}$ in $\OOO_K^{\, \prime} \bs \CC$ to one on $(\F_t)_{t\geq 0}$ in $\CC/\OOO_K$. If $D_K \neq -4, -3$, these families coincide and $dx'=\frac{2}{\sqrt{|D_K|}} dx$ hence the proof is complete. It remains to study the two cases $\OOO_K=\ZZ[i]$ and $\OOO_K=\ZZ[j]$ where $j = e^{i\frac{2\pi}{3}}$, namely the Gaussian and Eisenstein integers. The group $\OOO_K^\times$ acts on the left on $\CC/\OOO_K$ by $a \bullet \pr_{\OOO_K}(z) \mapsto \pr_{\OOO_K}(a^2 z)$ and the orbit space of this action (resp.~the restriction of this action to $\F_t$) canonically identifies with $\OOO_K^{\, \prime} \bs \CC$ (resp.~with $\F_t'$). 
	
	Let $\pr_\bullet : \CC/\OOO_K \to \OOO_K^{\, \prime} \bs \CC$ denote the orbit projection of the action $\bullet$. We want to pullback the result of Theorem \ref{th:PaPa23} by this projection. We set
	$$
	\X =
	\left\{
	\begin{array}{cl}
		\pr_{\ZZ[i]}([0,1]\cup[1,i]\cup[i,0]) & \mbox{ if } \OOO_K=\ZZ[i], \vspace{2mm}
		\\ \pr_{\ZZ[j]}([0,1]\cup[0,j]\cup[\frac{1}{2}, \frac{j^2}{j^2-1}] &
		\\ \cup[\frac{j^2}{j^2-1},1+\frac{j}{2}]\cup[\frac{j}{2}, \frac{j^2}{j-1}]\cup[\frac{j^2}{j-1}, \frac{1}{2}+j]) & \mbox{ if } \OOO_K=\ZZ[j].
	\end{array}
	\right.
	$$
	We verify that all orbits $\OOO_K^\times \bullet z \in \pr_\bullet((\CC/\OOO_K)\ssm \X)$ have the same cardinality $\frac{|\OOO_K^\times|}{2}$. Let us make $\frac{|\OOO_K^\times|}{2}$ choices for each orbit, allowing us to write a decomposition
	\begin{equation}\label{eq:decompos_COK_COK'}
		(\CC/\OOO_K)\ssm \X = \bigcup_{i=1}^{|\OOO_K^\times| / 2} \X_i
	\end{equation}
	where each $\X_i$ is an open subset of $\CC/\OOO_K$ such that $\pr_\bullet{_{|\X_i}}$ is an homeomorphism onto $(\OOO_K^{\, \prime} \bs \CC)\ssm \pr_\bullet(\X)$. See Figure \ref{fig:decompo_tores_zizj} for the choice of such a decomposition.
	\begin{figure}[ht]
		\centering
		\includegraphics[height=7.7cm]{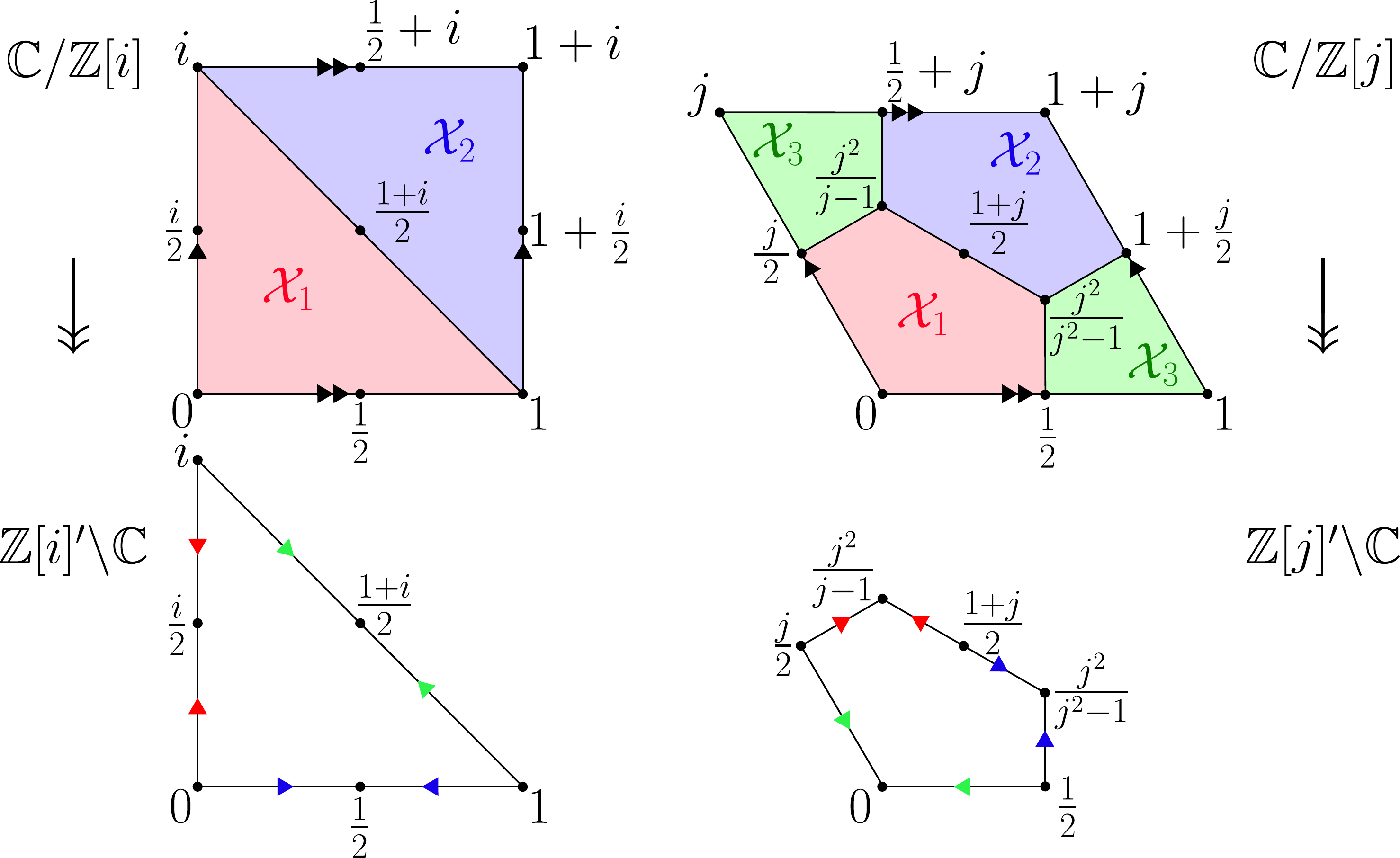}
		\caption{An example of a decomposition $\{\X_1, \X_2\}$ of $(\CC/\ZZ[i]) \ssm \X$ (resp.~$\{\X_1,\X_2,\X_3\}$ of $(\CC/\ZZ[j]) \ssm \X$) into parts which are each homeomorphic to $(\ZZ[i]' \bs \CC)\ssm \pr_\bullet(\X)$ (resp.~to $(\ZZ[j]' \bs \CC)\ssm \pr_\bullet(\X)$). The sets $\X$ and $\pr_\bullet(\X)$ are represented by black lines and black dots.}
		\label{fig:decompo_tores_zizj}
	\end{figure}
	
	Since the map $z \in \CC/\OOO_K \mapsto M[a(-t)h(-z)]\Gamma$ is constant on each orbit of $\bullet$, since $\X$ is negligible and since we have the formula $(\pr_\bullet{_{|\X_i}}^{-1})_* dx' = \frac{|\OOO_K^\times|}{\sqrt{|D_K|}} dx_{|\X_i}$, pulling back the convergence \eqref{eq:conv_Ft'_rightorder} by the map $(\pr_\bullet{_{|\X_i}}, \id_{M\bs G / \Gamma})$ grants us
	$$
	\frac{1}{\card \, \F_t'} \sum_{r \in \F_t \cap \X_i} \Delta_r \otimes \Delta_{M [a(-t)h(-r)] \Gamma} \weakstar \frac{|\OOO_K^\times|}{\sqrt{|D_K|}} dx_{|\X_i} \, \otimes \, 2 \int_{s=0}^{+\infty} \Theta_* {[a(s)]_* \mu_{H/\Gamma_H}} \; e^{-2s} \, ds.
	$$
	Combining the latter convergence with the estimate \eqref{eq:estim_card_Ft'_Ft}, summing over the decomposition \eqref{eq:decompos_COK_COK'}, and using the remark above for passing from vague to narrow convergence, this concludes the proof.
\end{proof}

Let $k \in \NN$, fix $\A$ a bounded rotation-invariant Borel subset of $\CC$, as well as a nonnegligible Borel subset $\B$ of $\CC/\OOO_K$. Since $\A$ is rotation-invariant, we have, for every real number $\theta \in \RR$, the equality $m(\theta) \CCC(\A) = \CCC(e^{i\theta} \A) = \CCC(\A)$. Thus, for every matrix $g \in \SL_2(\CC)$, the cardinality of $\CCC(\A) \cap g \widehat{\OOO_K^{\; 2}}$ only depends on the class $M[g]\Gamma$ of $g$ in $M \bs G / \Gamma$. Hence we can define the following Borel subset of $(\CC/\OOO_K) \times 	(M \bs G / \Gamma)$,
\begin{equation}\label{eq:def_Ck}
	\C_k = \B \times \Y_k \mbox{ where } \Y_k = \{ M[g]\Gamma \in M \bs G / \Gamma \, : \, \card(\CCC(\A) \cap g \widehat{\OOO_K^{\; 2}}) = k |\OOO_K^\times| \}.
\end{equation}
And using Equation \eqref{eq:dyn_descr_Ft} we obtain the formula, for $t$ large enough so that $\F_t \cap \B \neq \emptyset$,
\begin{equation}\label{eq:link_Ft'_thPapa}
	F_t(k,\A,\B) = \frac{\card \, \F_t}{\card(\F_t \cap \B)} \frac{1}{\card \, \F_t} \big(\sum_{r \in \F_t} \Delta_r \otimes \Delta_{M [a(-t)h(-r)] \Gamma} \big) (\C_k).
\end{equation}
Assuming that $\B$ has a negligible boundary, since $\B$ has nonzero measure, the equidistribution of $(\F_t)_{t \geq 0}$ implies the convergence,
\begin{equation}\label{eq:conv_card_FtB}
	\frac{\card \, \F_t}{\card(\F_t \cap \B)} \underset{t\to+\infty}{\longrightarrow} \frac{\sqrt{|D_K|}}{2 \, dx(\B)}.
\end{equation}
The behaviour of the remaining part in Equation \eqref{eq:link_Ft'_thPapa} will be described by the narrow convergence property in Corollary \ref{cor:adapted_PaPa23}. For this purpose, we need to control the limit measure of the boundary of $\C_k$.

\begin{lemma}\label{lem:boundary_Ck}
	Assume that the boundaries $\partial \A$ and $\partial \B$ are negligible respectively for the Lebesgue measure on $\CC$ and for the measure $dx$. Then
	$$
	\Big(\frac{2}{\sqrt{|D_K|}} dx \, \otimes \, 2 \int_{s=0}^{+\infty} \Theta_* {[a(s)]_* \mu_{H/\Gamma_H}} \; e^{-2s} \, ds \Big) (\partial \C_k) = 0.
	$$
\end{lemma}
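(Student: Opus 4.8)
The plan is to peel off the $\B$-factor (which is harmless by the hypothesis $dx(\partial\B)=0$), then to localize $\partial\Y_k$ inside the set $\Z$ of classes carrying a lattice point on $\partial\CCC(\A)$, and finally to transport the question onto $H\cong\CC$, where it reduces to the Lebesgue-negligibility of a countable union of circles. First, since $\C_k=\B\times\Y_k$ we have $\partial\C_k\subseteq(\partial\B\times\overline{\Y_k})\cup(\overline{\B}\times\partial\Y_k)$; as $dx(\partial\B)=0$ the first set is null for the product measure, so it remains to prove that $\nu(\partial\Y_k)=0$, where $\nu:=2\int_{s=0}^{+\infty}\Theta_*[a(s)]_*\mu_{H/\Gamma_H}\,e^{-2s}\,ds$ is the second factor of the limit measure of Corollary \ref{cor:adapted_PaPa23}.

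Next I would localize $\partial\Y_k$. The set $\Y_k$ is a level set of $f\colon M[g]\Gamma\mapsto\frac1{|\OOO_K^\times|}\,\card(\CCC(\A)\cap g\widehat{\OOO_K^{\;2}})$, a well-defined $\ZZ$-valued function because $\CCC(\A)$ is invariant under $M$ (as $\A$ is rotation-invariant), under $\Gamma$ acting on $\widehat{\OOO_K^{\;2}}$, and under the diagonal $\OOO_K^\times$-action. Since $\A$ is bounded, $\overline{\CCC(\A)}$ is compact, so for $g$ in a fixed compact neighbourhood only finitely many $v\in\widehat{\OOO_K^{\;2}}$ have $g v\in\overline{\CCC(\A)}$, uniformly; together with the openness of $\operatorname{int}\CCC(\A)$ and of $\CC^2\ssm\overline{\CCC(\A)}$, this shows that $f$ is locally constant off
\[
\Z:=\{\,M[g]\Gamma : \exists\,v\in\widehat{\OOO_K^{\;2}},\ g\,v\in\partial\CCC(\A)\,\},
\]
which is therefore closed, and $\partial\Y_k\subseteq\Z$. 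It now suffices to show $\nu(\Z)=0$.

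Then I would compute $\partial\CCC(\A)$ and unfold $\nu$. Writing $S=\{\,|z|:z\in\A\,\}\subseteq[0,+\infty[$ for the (bounded) set of moduli of $\A$, so that $\partial\A=\{\,z:|z|\in\partial S\,\}$, a direct computation of the interior and closure of $\CCC(\A)$ gives
\[
\partial\CCC(\A)\ \subseteq\ \{\,(w,u):0<|u|<1,\ w\in u\,\partial\A\,\}\ \cup\ \{\,(w,u):|u|=1,\ w\in u\,\overline\A\,\}\ \cup\ \{(0,0)\}.
\]
Unfolding its definition, $\nu$ is the image of $\mu_{H/\Gamma_H}\otimes 2e^{-2s}\,ds$ under $([h(z)]\Gamma_H,s)\mapsto M[g_{z,s}]\Gamma$, where a one-line matrix computation gives $g_{z,s}=\transp{(a(s)h(z))}^{-1}$ and
\[
g_{z,s}\begin{pmatrix}p\\q\end{pmatrix}=\begin{pmatrix}e^{s/2}p\\ e^{-s/2}(q-zp)\end{pmatrix}.
\]
Since $\mu_{H/\Gamma_H}$ is absolutely continuous with respect to the Haar measure $dz$ on $H\cong\CC$, it is enough to prove that for each fixed $s>0$ the set $\bigcup_{(p,q)\in\widehat{\OOO_K^{\;2}}}\{\,z\in\CC:g_{z,s}(p,q)\in\partial\CCC(\A)\,\}$ is $dz$-negligible, and by countability one may fix a single $(p,q)$.

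Finally, with $(w,u)=(e^{s/2}p,\ e^{-s/2}(q-zp))$: the apex $(0,0)$ is never hit, since $g_{z,s}$ is invertible and $(p,q)\ne 0$; the condition $|u|=1$ confines $|z-q/p|$ to one circle when $p\ne0$ and is empty when $p=0$ (then $q\in\OOO_K^\times$, so $|q|=1<e^{s/2}$), hence a $dz$-null set; the condition $w\in u\,\partial\A$ with $0<|u|<1$ amounts, for $p\ne0$, to $|z-q/p|$ lying in the image of the negligible set $\partial S\cap\,]0,+\infty[\,$ under the diffeomorphism $\sigma\mapsto e^s/\sigma$ — a union of circles whose radii form a null set — hence again $dz$-null, while for $p=0$ it does not occur because $0\notin\partial\A$ (automatic in our applications, where $\A$ is a closed disc, and the only place a hypothesis on $\A$ beyond the negligibility of $\partial\A$ is needed). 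Summing over $(p,q)$ and integrating over $s$ (with $\{s=0\}$ being $e^{-2s}\,ds$-negligible, so the $|u|=1$ case there does no harm) yields $\nu(\Z)=0$, hence $\nu(\partial\Y_k)=0$, which with the first reduction finishes the proof. I expect the main obstacle to be the localization step — where one must handle that $\CCC(\A)$ is neither open nor closed (the sphere $|u|=1$ and the apex $u\to0$) — together with the radial estimate, which is exactly where the Lebesgue-negligibility of $\partial\A$ enters.
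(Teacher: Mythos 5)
Your argument follows the paper's proof in all essentials: the same product-boundary reduction using $dx(\partial\B)=0$, the same localization of $\partial\Y_k$ to classes carrying a lattice vector on $\partial\CCC(\A)$, the same unfolding to $H\cong\CC$ via $a(-s)\transp{h(-z)}$ (your $g_{z,s}$ is exactly the matrix in the paper's computation), and the same conclusion by circles and negligible radial sets. There is, however, one step where your proof does not cover the lemma as stated: the vectors $(0,q)\in\widehat{\OOO_K^{\; 2}}$ (necessarily $q\in\OOO_K^\times$). You keep them inside your localization set $Z$ and then, in the case $0<|u|<1$, $w\in u\,\partial\A$, you need the extra assumption $0\notin\partial\A$ to dismiss them. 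That assumption is not among the hypotheses: the lemma only requires $\partial\A$ to be Lebesgue-negligible, and the paper does invoke it (through Lemma \ref{lem:conv_Ft}, in the proof of Theorem \ref{th:existence_density}) with $\A=\overline{\rm D}(0,\delta)$ for every $\delta\geq 0$, including the degenerate case $\delta=0$ where $\partial\A=\{0\}$. If $0\in\partial\A$, then for every $s>0$ and every $z$ the point $(0,qe^{-s/2})$ lies on $\partial\CCC(\A)$, so your set $Z$ is the whole space and the estimate ``$Z$ is null'' fails, even though the lemma remains true.

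The paper removes this obstruction before any measure estimate: since $a(-s)\transp{h}^{-1}(0,q)$ does not depend on $h$, the contribution of the $p=0$ vectors to the counting function is constant on $H/\Gamma_H$, hence cannot produce boundary of the level set; therefore the boundary of $[a(-s)]\Theta(\Y_k)\cap(H/\Gamma_H)$ is contained in the smaller set where some vector with $p\neq 0$ lands on $\partial\CCC(\A)$, and only that set has to be shown $dz$-negligible. Incorporating this constancy observation into your localization step (i.e.\ excluding the $p=0$ vectors from $Z$) makes your proof complete under exactly the stated hypotheses. The remainder of your argument is correct; your polar-coordinate reformulation via the radial set $S$ of moduli is a mild variant of the paper's direct use of the map $\alpha\mapsto \frac{q}{p}-\frac{e^s}{\alpha}$ on $\partial\A\ssm\{0\}$, and buys nothing essential either way.
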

\begin{proof}
	By the definition \eqref{eq:def_Ck}, we have the equality $\partial \C_k = (\partial \B \times \overline{\Y_k}) \cup (\overline{\B} \times \partial \Y_k)$. The first term in this union has measure $0$ since $dx(\partial \B) = 0$. For the second term, since $[a(-s)]$ and $\Theta=\Theta^{-1}$ are homeomorphisms of $M \bs G / \Gamma$, we obtain
	\begin{align*}
		\int_{s=0}^{+\infty} \Theta_* {[a(s)]_* \mu_{H/\Gamma_H}} (\partial \Y_k) \; e^{-2s} \, ds & = \int_0^{+\infty} \mu_{H/\Gamma_H} ([a(-s)]\Theta (\partial \Y_k)) \; e^{-2s} \, ds
		\\ & = \int_0^{+\infty} \mu_{H/\Gamma_H} (\partial [a(-s)]\Theta(\Y_k)) \; e^{-2s} \, ds.
	\end{align*}
	For every $p,q \in \CC$, we compute
	\begin{equation}\label{eq:boundary_lift_description}
		a(-s) \transp{h(-z)} \begin{pmatrix} p \\ q \end{pmatrix} = \begin{pmatrix} p e^{\frac{s}{2}} \\ (q-z p) e^{-\frac{s}{2}} \end{pmatrix}.
	\end{equation}
	In this proof, we will frequently use the following description of $\partial \CCC(\A)$, arising from its definition,
	\begin{equation}\label{eq:boundary_coneA}
		\partial \CCC(\A) = \bigcup_{u \in \overline{\rm D}(0,1)} (u \, \partial \A) \times \{u\} \; \cup \bigcup_{u \in {\rm C}(0,1)} (u \, \overline{\A}) \times \{u\},
	\end{equation}
	where ${\rm C}(0,1)$ denotes the unit circle of $\CC$. We fix a real number $s > 0$. The intersection
	\begin{equation}\label{eq:before_boundary_intersection}
		[a(-s)]\Theta(\Y_k) \cap (H / \Gamma_H) = \{ [h] * \Gamma_H \, : \, \card(\CCC(\A) \cap a(-s) \transp{h}^{-1} \widehat{\OOO_K^{\; 2}}) = k |\OOO_K^\times| \}
	\end{equation}
	has its boundary contained in $\{ [h] * \Gamma_H \, : \, \partial \CCC(\A) \cap a(-s) \transp{h}^{-1} \widehat{\OOO_K^{\; 2}} \neq \{0\}  \}$. Now notice that the points $a(-s) \transp{h}^{-1} \begin{pmatrix} 0 \\ q\end{pmatrix}$ given by the computation \eqref{eq:boundary_lift_description} do not depend on the choice of $h \in H$. Hence each of these points is, or is not, in $\CCC(\A)$, independently on the choice of $h$. Thus the boundary of the right-hand side of Equation \eqref{eq:before_boundary_intersection} is furthermore contained in
	$$
	Z=\{ [h] * \Gamma_H \, : \, \partial \CCC(\A) \cap a(-s) \transp{h}^{-1} (\widehat{\OOO_K^{\; 2}} \ssm (\{0\} \times \CC)) \neq \{0\}  \}.
	$$
	The preimage of $Z$ in $H$ is
	$$\tilde{Z} = \{ h(z) \, : \, z \in \CC, \, \partial \CCC(\A) \cap a(-s) \transp{h(-z)} (\widehat{\OOO_K^{\; 2}} \ssm (\{0\} \times \CC)) \neq \{0\} \}.$$
	Let us investigate this subset more thoroughly. Let $z \in \CC$ be such that $h(z) \in \tilde{Z}$. Let $(p,q) \in \OOO_K$ with $p \neq 0$ be such that
	$$
	a(-s) \transp{h(-z)} \begin{pmatrix} p \\ q \end{pmatrix} = \begin{pmatrix} p e^{\frac{s}{2}} \\ (q-z p) e^{-\frac{s}{2}} \end{pmatrix} \in \partial \CCC(\A).
	$$
	Using the description \eqref{eq:boundary_coneA} and since $p \neq 0$, we have:
	\begin{itemize}
		\item either $pe^{\frac{s}{2}} \in u \partial \A$ and $(q-z p) e^{-\frac{s}{2}} = u$ (and $u \in \overline{\rm D}(0,1) \ssm \{0\}$), which implies that $z = \frac{q}{p} - \frac{u e^s}{pe^{\frac{s}{2}}} \in \frac{q}{p} - e^{s} (\partial \A \ssm \{0\})^{-1}$,
		\item or $(q-z p) e^{-\frac{s}{2}} = u \in {\rm C}(0,1)$ (and $pe^{\frac{s}{2}} \in u \A$), which implies in particular that $z = \frac{q}{p} - \frac{u e^\frac{s}{2}}{p} \in \frac{q}{p} - \frac{1}{p}e^{\frac{s}{2}} {\rm C}(0,1)$.
	\end{itemize}
	In both cases, $z$ belongs to a negligible Borel set depending only on $p,q$ (and of course $\A$ and $s$). Since $\OOO_K^{\; 2}$ is countable, this implies that $dz(\tilde{Z})=0$ (where $dz$ is the Lebesgue measure on $H$), and then using the definition of $\mu_{H/\Gamma_H}$, we finally obtain the equality $\mu_{H/\Gamma_H} (\partial [a(-s)]\Theta(\Y_k))=0$.
\end{proof}

We now combine all the previous results of this Section to obtain a convergence result for the the tail functions $F_t$.
\begin{lemma}\label{lem:conv_Ft}
	For every $k \in \NN$, for every bounded rotation-invariant Borel subset $\A$ of $\CC$ with negligible boundary for the Lebesgue measure, for every nonnegligible Borel subset $\B$ of $\CC/\OOO_K$ with negligible boundary for the measure $dx$, we have the convergence, as $t \to +\infty$,
	{\small
	$$
	F_t(k,\A,\B) \rightarrow 2 \int_{s=0}^{+\infty} \mu_{H/\Gamma_H}(\{ [h] * \Gamma_H \, : \, \card(\CCC(\A) \cap a(-s) \transp{h}^{-1} \widehat{\OOO_K^{\; 2}}) = k |\OOO_K^\times| \}) \, e^{-2s} \, ds.
	$$
	}
\end{lemma}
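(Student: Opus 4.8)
The plan is to feed the identity \eqref{eq:link_Ft'_thPapa} into the three convergence facts already available: the narrow convergence of Corollary \ref{cor:adapted_PaPa23}, the boundary estimate of Lemma \ref{lem:boundary_Ck}, and the counting asymptotics \eqref{eq:conv_card_FtB}, and then to unwind the limiting expression using the description \eqref{eq:before_boundary_intersection} obtained inside the proof of Lemma \ref{lem:boundary_Ck}.

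First I would restrict to real numbers $t$ large enough that $\pr_{\OOO_K}$ is injective on $e^{-t}\A$ (possible since $\A$ is bounded) and that $\F_t \cap \B \neq \emptyset$ (possible since $\B$ is nonnegligible, by the equidistribution of $(\F_t)_{t\geq 0}$), so that \eqref{eq:dyn_descr_Ft} and hence \eqref{eq:link_Ft'_thPapa} apply; the limit as $t\to+\infty$ is unaffected by discarding small $t$. The set $\C_k = \B \times \Y_k$ of \eqref{eq:def_Ck} is Borel, and Lemma \ref{lem:boundary_Ck} states that the limit measure $\nu := \frac{2}{\sqrt{|D_K|}} dx \otimes 2 \int_{s=0}^{+\infty} \Theta_* [a(s)]_* \mu_{H/\Gamma_H}\, e^{-2s}\, ds$ satisfies $\nu(\partial \C_k) = 0$. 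Since Corollary \ref{cor:adapted_PaPa23} provides a \emph{narrow} convergence of probability measures on the $\sigma$-compact proper metric space $(\CC/\OOO_K) \times (M \bs G / \Gamma)$, the portmanteau theorem gives $\frac{1}{\card\,\F_t}\big(\sum_{r \in \F_t} \Delta_r \otimes \Delta_{M[a(-t)h(-r)]\Gamma}\big)(\C_k) \to \nu(\C_k)$. Combining this with \eqref{eq:conv_card_FtB} and \eqref{eq:link_Ft'_thPapa} yields $F_t(k,\A,\B) \to \frac{\sqrt{|D_K|}}{2\, dx(\B)}\, \nu(\C_k)$.

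It then remains to evaluate $\nu(\C_k)$ and simplify. By the product structure, $\nu(\B \times \Y_k) = \frac{2}{\sqrt{|D_K|}}\, dx(\B) \cdot 2 \int_{s=0}^{+\infty} \big(\Theta_* [a(s)]_* \mu_{H/\Gamma_H}\big)(\Y_k)\, e^{-2s}\, ds$, so the factors $dx(\B)$ and $\sqrt{|D_K|}$ cancel against the prefactor $\frac{\sqrt{|D_K|}}{2\, dx(\B)}$ and the limit is $2 \int_{s=0}^{+\infty} \big(\Theta_* [a(s)]_* \mu_{H/\Gamma_H}\big)(\Y_k)\, e^{-2s}\, ds$. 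For the integrand, since $\Theta = \Theta^{-1}$ and $[a(s)]^{-1} = [a(-s)]$ are homeomorphisms of $M \bs G / \Gamma$, one has $\big(\Theta_* [a(s)]_* \mu_{H/\Gamma_H}\big)(\Y_k) = \mu_{H/\Gamma_H}\big([a(-s)]\Theta(\Y_k)\big)$; because $\mu_{H/\Gamma_H}$ is carried by $H/\Gamma_H$, this equals $\mu_{H/\Gamma_H}\big([a(-s)]\Theta(\Y_k) \cap (H/\Gamma_H)\big)$, which by the identity \eqref{eq:before_boundary_intersection} is exactly $\mu_{H/\Gamma_H}\big(\{[h]*\Gamma_H : \card(\CCC(\A) \cap a(-s)\transp{h}^{-1}\widehat{\OOO_K^{\; 2}}) = k|\OOO_K^\times|\}\big)$. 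Substituting gives the claimed formula.

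There is no serious obstacle; this is an assembly step. The points requiring care are that one genuinely needs narrow — not merely vague — convergence to apply the portmanteau theorem to the set $\C_k$, which is in general neither open nor closed (this is why Corollary \ref{cor:adapted_PaPa23} was stated as a narrow convergence and why Lemma \ref{lem:boundary_Ck} controls $\partial \C_k$), and the bookkeeping of the normalisation constants so that the $dx(\B)$-dependence (together with the $\sqrt{|D_K|}$) drops out — thereby confirming that the limit is independent of the conditioning set $\B$.
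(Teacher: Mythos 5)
Your proposal is correct and follows essentially the same route as the paper: apply the portmanteau theorem to $\C_k$ using the narrow convergence of Corollary \ref{cor:adapted_PaPa23} together with the boundary control of Lemma \ref{lem:boundary_Ck}, combine with \eqref{eq:link_Ft'_thPapa} and \eqref{eq:conv_card_FtB}, and then rewrite $(\Theta_*[a(s)]_*\mu_{H/\Gamma_H})(\Y_k)$ as the stated $\mu_{H/\Gamma_H}$-measure. Your only (harmless) deviation is quoting the identity \eqref{eq:before_boundary_intersection} from the proof of Lemma \ref{lem:boundary_Ck} instead of redoing the change of variable $h=a(-s)\transp{g}^{-1}$ as the paper does; the computation is the same.
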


\begin{proof}
	Using the definition \eqref{eq:def_Ck} of $\C_k$ and the fact that its boundary $\partial \C_k$ is negligible for the limit measure in Corollary \ref{cor:adapted_PaPa23}, we obtain the convergence, as $t \to +\infty$,
	{\footnotesize
	\begin{align*}
	\frac{1}{\card \, \F_t} \big(\sum_{r \in \F_t} \Delta_r \otimes \Delta_{M [a(-t)h(-r)] \Gamma} \big) (\C_k)
 \to & \Big(\frac{2}{\sqrt{|D_K|}} dx \, \otimes \, 2 \int_{s=0}^{+\infty} \Theta_* {[a(s)]_* \mu_{H/\Gamma_H}} \; e^{-2s} \, ds \Big) (\C_k)
	\\ & = \frac{4 dx(\B)}{\sqrt{|D_K|}} \int_{s=0}^{+\infty} (\Theta_* {[a(s)]_* \mu_{H/\Gamma_H}})(\Y_k) \; e^{-2s} \, ds. \numberthis\label{eq:conv_rightpart_Ft}
	\end{align*}
	}Then for all $s \geq 0$ and $g \in \SL_2(\CC)$, using the change of variable $h = a(-s) \transp{g}^{-1}$ i.e.~$g = a(-s) \transp{h}^{-1}$ for the last equality, we obtain
	\begin{align*}
	& (\Theta_* {[a(s)]_* \mu_{H/\Gamma_H}})(\Y_k) = \mu_{H/\Gamma_H}([a(-s)] \Theta(\Y_k))
	\\= \; & \mu_{H/\Gamma_H}(\{ [a(-s)\transp{g}^{-1}] * \Gamma_H \, : \, g \in \SL_2(\CC), \, [a(-s)\transp{g}^{-1}]\in H
	\\ & \hspace{4.7cm} \mbox{ and }\, \card(\CCC(\A) \cap g \widehat{\OOO_K^{\; 2}}) = k |\OOO_K^\times| \})
	\\ = \; & \mu_{H/\Gamma_H}(\{ [h] * \Gamma_H \, : \, \card(\CCC(\A) \cap a(-s) \transp{h}^{-1} \widehat{\OOO_K^{\; 2}}) = k |\OOO_K^\times| \}) \, e^{-2s} \, ds.\numberthis\label{eq:formula_limitmeasure_Y_k}
	\end{align*}
	Combining the formula \eqref{eq:link_Ft'_thPapa} for $F_t(k,\A,\B)$ first with the convergence \eqref{eq:conv_card_FtB} then with Equations \eqref{eq:conv_rightpart_Ft} and \eqref{eq:formula_limitmeasure_Y_k}, the proof of the lemma is complete.
\end{proof}

We now state and prove the main theorem in this section.

\begin{theorem}\label{th:existence_density}
	There exists a probability measure $\mu$ on $[0,+\infty[\,$, absolutely continuous with respect to the Lebesgue measure, such that for every nonnegligible Borel subset $\B$ of $\CC/\OOO_K$ with negligible boundary, we have the narrow convergence, as $t \to +\infty$,
	$$
	\mu_{t,\B} \to \mu.
	$$
	Furthermore, we have the equality $\mu([0,1])=0$ and the cumulative distribution function of $\mu$ is given by the formula, for every real number $\delta \geq 0$,
	{\small
	$$
	\mu([0,\delta]) = 1-\frac{4}{\sqrt{|D_K|}} \int_{s=0}^{+\infty} dx(\{ z \, : \, \card(\CCC(\overline{\rm D}(0,\delta)) \cap a(-s) \transp{h(z)} \widehat{\OOO_K^{\; 2}}) = |\OOO_K^\times| \}) \, e^{-2s}ds.
	$$
	}
\end{theorem}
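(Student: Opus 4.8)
The plan is to read the limiting tail function off Lemma~\ref{lem:conv_Ft}, promote the resulting pointwise convergence of distribution functions to a narrow convergence of measures by a tightness argument, and then establish absolute continuity by quantitatively controlling the explicit integral formula for the limit. First I would apply Lemma~\ref{lem:conv_Ft} with $k=1$ and $\A=\overline{\rm D}(0,\delta)$ (a bounded rotation-invariant Borel set with negligible boundary); combined with \eqref{eq:link_Ft_mutB} this gives, for every $\delta\geq 0$ and every nonnegligible Borel set $\B\subset\CC/\OOO_K$ with negligible boundary, the convergence $\mu_{t,\B}(\,]\delta,+\infty[\,)\to G(\delta)$ as $t\to+\infty$, where
\[
G(\delta)=2\int_{0}^{+\infty}\mu_{H/\Gamma_H}\Big(\big\{\,[h]*\Gamma_H:\card\big(\CCC(\overline{\rm D}(0,\delta))\cap a(-s)\transp{h}^{-1}\widehat{\OOO_K^{\;2}}\big)=|\OOO_K^\times|\,\big\}\Big)\,e^{-2s}\,ds
\]
is independent of $\B$, takes values in $[0,1]$, and is nonincreasing in $\delta$ since the cone grows with $\delta$.

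Next I would show $(\mu_{t,\B})_t$ is tight. Setting $\delta_r=d(r,\F_t\ssm\{r\})$, the open disks ${\rm D}(r,\tfrac{\delta_r}{2})\subset\CC/\OOO_K$ with $r\in\F_t$ are pairwise disjoint, so $\sum_{r\in\F_t}\delta_r^2\leq\tfrac{4}{\pi}\covol_{\OOO_K}$; hence $\int x^2\,d\mu_{t,\B}(x)=\tfrac{e^{2t}}{\card(\F_t\cap\B)}\sum_{r\in\F_t\cap\B}\delta_r^2$ stays bounded as $t\to+\infty$ by \eqref{eq:conv_card_FtB} and $\card\F_t\sim c_Ke^{2t}$. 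By Chebyshev's inequality and Prokhorov's theorem, $(\mu_{t,\B})_t$ is narrowly relatively compact; since its distribution functions converge pointwise to $1-G$, every narrow subsequential limit is the probability measure $\mu$ whose distribution function is the right-continuous modification of $1-G$ (a probability measure is determined by its values at continuity points, which here form a co-countable set). Thus $\mu_{t,\B}\to\mu$ narrowly, $\mu$ is independent of $\B$, and $G(\delta)=\mu(\,]\delta,+\infty[\,)$ at continuity points of $G$ (in particular $G(\delta)\to0$ as $\delta\to+\infty$).

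For $\mu([0,1])$ and the explicit formula: if $\pr_{\OOO_K}(p_1/q_1)\neq\pr_{\OOO_K}(p_2/q_2)$ lie in $\F_t$ then $d\big(\pr_{\OOO_K}(p_1/q_1),\pr_{\OOO_K}(p_2/q_2)\big)=\min_{n\in\OOO_K}\tfrac{|p_1q_2-p_2q_1-nq_1q_2|}{|q_1q_2|}\geq\tfrac{1}{|q_1q_2|}\geq e^{-t}$, because the minimising numerator is a nonzero element of $\OOO_K$, hence of modulus $\geq 1$; so $e^t\delta_r\geq 1$, $\supp\mu_{t,\B}\subset[1,+\infty[$, whence $G(\delta)=1$ and $\mu([0,\delta])=0$ for $\delta<1$. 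For the formula I would unwind the cone condition as in the proof of Lemma~\ref{lem:boundary_Ck}: the $|\OOO_K^\times|$ vectors $(0,\zeta)$ with $\zeta\in\OOO_K^\times$ always lie in $\CCC(\overline{\rm D}(0,\delta))\cap a(-s)\transp{h(z)}\widehat{\OOO_K^{\;2}}$ for $s\geq 0$, and by \eqref{eq:boundary_lift_description} a further vector $(p,q)$ with $p\neq0$ lies there precisely when $z$ represents a class in $\pr_{\OOO_K}\big(-\tfrac{q}{p}+A(0,\tfrac{e^s}{\delta},\tfrac{e^{s/2}}{|p|})\big)$, where $A(0,\rho,R)=\{w\in\CC:\rho\leq|w|\leq R\}$. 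Since this count is invariant under the $\Gamma_H$-action on $H\cong\CC$ (namely $z\mapsto\zeta^{-2}z+b$), averaging it over $H/\Gamma_H$ against $\mu_{H/\Gamma_H}$ is the same as averaging over $\CC/\OOO_K$ against the probability measure $\tfrac{2}{\sqrt{|D_K|}}dx$; hence the set in $G(\delta)$ corresponds to $\tilde E_{s,\delta}:=\{z\in\CC/\OOO_K:\card(\CCC(\overline{\rm D}(0,\delta))\cap a(-s)\transp{h(z)}\widehat{\OOO_K^{\;2}})=|\OOO_K^\times|\}$, giving $\mu([0,\delta])=1-G(\delta)=1-\tfrac{4}{\sqrt{|D_K|}}\int_0^{+\infty}dx(\tilde E_{s,\delta})e^{-2s}\,ds$ as stated.

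It remains to prove absolute continuity; then $\mu(\{1\})=0$, so $\mu([0,1])=\mu([0,1[)=0$, and the formula holds for all $\delta\geq 0$. I would show $G$ is locally Lipschitz. The sets $\tilde E_{s,\delta}$ are nonincreasing in $\delta$, and for $\delta_1<\delta_2$ the difference $\tilde E_{s,\delta_1}\ssm\tilde E_{s,\delta_2}$ is covered by the images under $\pr_{\OOO_K}$ of the sets $-\tfrac{q}{p}+\big(A(0,\tfrac{e^s}{\delta_2},\tfrac{e^{s/2}}{|p|})\ssm A(0,\tfrac{e^s}{\delta_1},\tfrac{e^{s/2}}{|p|})\big)$ over $p\in\OOO_K\ssm\{0\}$ and $q$ modulo $p\OOO_K$; each has area at most $\pi e^{2s}(\delta_1^{-2}-\delta_2^{-2})$ and is empty unless $|p|<\delta_2 e^{-s/2}$, and there are $|p|^2$ classes $q$ per $p$. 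Using $\sum_{0<|p|\leq X}|p|^2\ll_K X^4$, this yields $dx(\tilde E_{s,\delta_1}\ssm\tilde E_{s,\delta_2})\ll_K\delta_2^4(\delta_1^{-2}-\delta_2^{-2})$ for $s\leq 2\log\delta_2$ and $0$ otherwise, so integrating against $e^{-2s}$ gives $0\leq G(\delta_1)-G(\delta_2)\ll_K\delta_2^4(\delta_1^{-2}-\delta_2^{-2})=\tfrac{\delta_2^2(\delta_1+\delta_2)}{\delta_1^2}(\delta_2-\delta_1)$, which is $\ll_{K,D}(\delta_2-\delta_1)$ for $1\leq\delta_1<\delta_2\leq D$; together with $G\equiv 1$ on $[0,1]$ this makes $G$ Lipschitz on every $[0,D]$, hence $\mu$ absolutely continuous (and then $G=\mu(\,]\cdot,+\infty[\,)$ everywhere). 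I expect this last step to be the main obstacle — in particular the uniform bound $\sum_{0<|p|\leq X}|p|^2\ll_K X^4$ and the bookkeeping of which annuli contribute at each $s$ — together, to a lesser extent, with the normalisation check relating $\mu_{H/\Gamma_H}$ to $dx$ in the exceptional cases $D_K\in\{-3,-4\}$.
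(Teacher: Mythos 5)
Your proposal is correct, and its skeleton (Lemma~\ref{lem:conv_Ft} with $k=1$, $\A=\overline{\rm D}(0,\delta)$, Equation~\eqref{eq:link_Ft_mutB}, then identifying $\mu_{H/\Gamma_H}$-averages of the counting function with $\frac{2}{\sqrt{|D_K|}}dx$-averages over $\CC/\OOO_K$) is the same as the paper's, but you diverge at the two delicate steps and both of your substitutes work. First, to rule out escape of mass the paper proves directly that $F(\delta)\to 1$ as $\delta\to+\infty$, using a Dirichlet-type Diophantine approximation theorem over $K$ (Equation~\eqref{eq:dirichlet_complex_th}) together with monotone and dominated convergence; you instead prove tightness of $(\mu_{t,\B})_t$ via the second-moment bound coming from the pairwise disjoint half-gap disks, then invoke Prokhorov and a subsequence argument. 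This avoids Diophantine input altogether; the only point to patch is that the area of a disk of radius $\delta_r/2$ in the torus equals $\pi\delta_r^2/4$ only when $\delta_r$ does not exceed the injectivity radius, so you should either note that the rescaled gaps force $\delta_r\to 0$ uniformly as $t\to+\infty$ (by equidistribution of $\F_t$), or run the packing argument with lifted Euclidean disks; this is routine. Second, for absolute continuity the paper shows that the union of annuli in Equation~\eqref{eq:formula_integrand_F_annuli} can be taken over a finite set $S_M$ uniformly in $s\geq 0$ and $\delta\in\,]0,M]$, deduces that each $f_s$ is absolutely continuous, and concludes by Fubini with the a.e.\ derivative $F'(\delta)=2\int_0^{+\infty}f_s'(\delta)e^{-2s}ds$; you instead bound $dx(\tilde E_{s,\delta_1}\ssm\tilde E_{s,\delta_2})$ by counting the at most $\ll_K(\delta_2 e^{-s/2})^4$ relevant pairs $(p,q\bmod p\OOO_K)$ and using the area $\pi e^{2s}(\delta_1^{-2}-\delta_2^{-2})$ of each annular difference, getting a local Lipschitz bound for the tail function. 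Your estimate is more quantitative than the paper's argument (local Lipschitz continuity is stronger than absolute continuity, and your counting is exactly the kind of input the paper only uses qualitatively through $S_M$), at the price of not producing the explicit derivative formula the paper records. Your treatment of $\mu([0,1])=0$ (direct arithmetic lower bound $|p_1q_2-p_2q_1-nq_1q_2|\geq 1$) is an equivalent, slightly more elementary substitute for the paper's observation that $f_s$ vanishes on $[0,e^{s/2}]$ because $\sys_{\OOO_K}=1$.
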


\begin{proof}
	For every real number $\delta \geq 0$, set
	$$
	F(\delta)=1-\frac{4}{\sqrt{|D_K|}} \int_{s=0}^{+\infty} dx(\{ z \, : \, \card(\CCC(\overline{\rm D}(0,\delta)) \cap a(-s) \transp{h(z)} \widehat{\OOO_K^{\; 2}}) = |\OOO_K^\times| \}) \, e^{-2s}ds.
	$$
	Recalling the definition $\mu_{H/\Gamma_H} = (z \mapsto [h(z)]*\Gamma_H)_* (\frac{2}{\sqrt{|D_K|}} dx)$, by Equation \eqref{eq:link_Ft_mutB} and Lemma \ref{lem:conv_Ft} with $k=1$ and $\A=\overline{\rm D}(0,\delta)$, we obtain the convergence, for every $\delta \geq 0$,
	$$
	\mu_{t,\B}([0,\delta]) \underset{t \to +\infty}{\longrightarrow} F(\delta).
	$$
	Since $(\mu_{t,\B})_{t \geq 0}$ is a sequence of probability measures, it remains to prove that $F$ is the cumulative distribution function of some probability measure $\mu$ on $\,[0,+\infty[ \,$ with density with respect to the Lebesgue measure, i.e.~that $F$ is nondecreasing, is absolutely continuous, verifies the equality $F(0)=0$ and the convergence $F(\delta) \to 1$ as $\delta \to +\infty$.
		
	We begin by investigating more thoroughly the definition of $F$. For every $\delta \geq 0$, we have the description
	$$
	\CCC(\overline{\rm D}(0,\delta)) = \bigcup_{u \in \, ]0,1]} \overline{\rm D}(0,u\delta) \times {\rm C}(0,u).
	$$
	For all $p,q \in \widehat{\OOO_K^{\; 2}}$, all real numbers $\delta, s \geq 0$, and every complex number $z \in \CC$, this yields the following equivalence
	\begin{equation}\label{eq:equiv_inter_defF}
		a(-s) \transp{h(z)} \begin{pmatrix} p\\-q \end{pmatrix} \in \CCC(\overline{\rm D}(0,\delta)) \iff
		\left\{
		\begin{array}{c}
			|p| \leq \delta e^{-s} |pz-q| \, ,\\
			0 < |pz-q| \leq e^{\frac{s}{2}}.
		\end{array}
		\right.
	\end{equation}
	
	The equality $F(0)=0$ is then a consequence of Equation \eqref{eq:equiv_inter_defF} with $\delta=0$, since then $q$ may take any value in $\OOO_K^\times$ (and only these ones, in order for $0$ and $q$ to be coprime), no matter what are the values of $z \in \CC$ and $s \geq 0$. Hence we obtain $F(0) = 1-\frac{4}{\sqrt{|D_K|}} \int_0^{+\infty} \frac{\sqrt{|D_K|}}{2} \, e^{-2s} ds = 1-\frac{2}{2}=0.$

	Furthermore, the latter argument proves that for any Borel set $\A$ containing $0$, the intersection $\CCC(\A) \cap a(-s) \transp{h(z)} \widehat{\OOO_K^{\; 2}}$ contains at least $|\OOO_K^\times|$ points. Since in addition the cardinality of this intersection is nondecreasing with respect to $\delta$ for $\A=\overline{\rm D}(0,\delta)$, the function $F$ is nondecreasing as well.
	
	In order to compute the limit of $F$ at $+\infty$, we need to describe points in the intersection $\CCC(\overline{\rm D}(0,\delta)) \cap a(-s) \transp{h(z)} \widehat{\OOO_K^{\; 2}}$ which are nontrivial, namely not of the form $a(-s) \transp{h(z)} \begin{pmatrix} 0 \\ q \end{pmatrix}$ with $q \in \OOO_K^\times$. Each of these points necessarily has a nonzero first coordinate $pe^\frac{s}{2}$. The equivalence \eqref{eq:equiv_inter_defF} then translates to, for all $s \geq 0$ and $\delta >0$,
	\begin{align*}
			\card(\CCC(\overline{\rm D}(0,\delta)) \cap a(-s) \transp{h(z)} \widehat{\OOO_K^{\; 2}}) \neq |\OOO_K^\times| \iff & \exists (p,q) \in \widehat{\OOO_K^{\; 2}}, p \neq 0,
			\\ & \frac{e^s}{\delta} \leq \big|z-\frac{q}{p}\big| \leq \frac{e^\frac{s}{2}}{|p|}. \numberthis\label{eq:equiv_inter_defF_pnonzero}
	\end{align*}
	Using the pigeonhole principle, we have the following adapted version of Dirichlet Diophantine approximation theorem \cite[§~7,Prop.~2.7]{elstrod&co1998groupsactinghypspace}:
	there exists a constant $\kappa>0$ (depending on $K$) such that for every nonrational complex point $z \in \CC \ssm K$, there exist infinitely many pairs $(p,q) \in \widehat{\OOO_K^{\; 2}}$ with $p \neq 0$ verifying
	\begin{equation}\label{eq:dirichlet_complex_th}
		\big|z-\frac{q}{p}\big| \leq \frac{\kappa}{|p|^2}
	\end{equation}

	In particular, for each point $z \in \CC \ssm K$ there exists such a pair $(p,q)$ with $|p| \geq \kappa$, hence the right-hand inequality of Equation \eqref{eq:equiv_inter_defF_pnonzero} holds for this pair. Since $z \notin K$, we have $|z - \frac{q}{p}| >0$, hence the equivalence \eqref{eq:equiv_inter_defF_pnonzero} yields, for $\delta$ large enough (depending on $z$),
	$$
	a(-s) \transp{h(z)} \begin{pmatrix} p\\-q \end{pmatrix} \in \CCC(\overline{\rm D}(0,\delta)).
	$$
	This gives a nontrivial point in the intersection defining $F$. Since $\CC \ssm K$ has full measure, Beppo Levi's monotone convergence lemma gives, for every $s \geq 0$, as $\delta \to +\infty$,
	$$
	dx(\{ z \in \CC/\OOO_K \, : \, \card(\CCC(\overline{\rm D}(0,\delta)) \cap a(-s) \transp{h(z)} \widehat{\OOO_K^{\; 2}}) = |\OOO_K^\times| \}) \longrightarrow 0.
	$$
	By the dominated convergence theorem, since the integrand defining $F$ is bounded from above by the $L^1$ function $s \mapsto \frac{4}{\sqrt{|D_K|}}e^{-2s}$ on $[0,+\infty[\,$, we finally have
	$$
	F(\delta) \underset{\delta \to +\infty}{\longrightarrow} 1.
	$$
	
	The last property to check is the absolute continuity of $F$. For every point $z \in \CC$ and all real numbers $r, R \geq 0$, recall that $A(z,r,R)$ is the closed annulus in $\CC$ centred at $z$ and of radii $r$ and $R$ (hence empty if $r > R$). Let $s \geq 0$. By the equivalence \eqref{eq:equiv_inter_defF_pnonzero}, we obtain, for every real number $\delta > 0$ (and using the change of variable $(p,q) \mapsto (p,-q)$ on $\widehat{\OOO_K^{\; 2}}$),
	\begin{align*}
	& dx(\{ z \, : \, \card\big(\CCC(\overline{\rm D}(0,\delta)) \cap a(-s) \transp{h(z)} \widehat{\OOO_K^{\; 2}}\big) = |\OOO_K^\times|\}) \numberthis\label{eq:formula_integrand_F_annuli}
	\\ = \; & \frac{\sqrt{|D_K|}}{2}
	- dx\Big(
	\bigcup_{\substack{(p,q) \in \widehat{\OOO_K^{\; 2}} \\ p \neq 0}} \pr_{\OOO_K}\Big( A\Big(\frac{q}{p}, \frac{e^s}{\delta}, \frac{e^\frac{s}{2}}{|p|}\Big)
	\Big) \Big).
	\end{align*}
	Let us fix a real number $M >0$. We now show that this union is finite in a uniform way: there exists a finite subset $S_M$ of $\widehat{\OOO_K^{\; 2}}$ such that for all $\delta \in \, ]0,M]$ and $s\geq 0$, we have
	\begin{equation}\label{eq:union_annuli_finite}
		\bigcup_{\substack{(p,q) \in \widehat{\OOO_K^{\; 2}} \\ p \neq 0}} \pr_{\OOO_K}\Big( A\Big(\frac{q}{p}, \frac{e^s}{\delta}, \frac{e^\frac{s}{2}}{|p|}\Big) \Big)  = \bigcup_{(p,q)\in S_M} \pr_{\OOO_K}\Big( A\Big(\frac{q}{p}, \frac{e^s}{\delta}, \frac{e^\frac{s}{2}}{|p|}\Big) \Big).
	\end{equation}
	For a fixed $\delta > 0$, in the left-hand side of Equation \eqref{eq:union_annuli_finite}, most of the annuli are empty, namely those associated to pairs $(p,q)$ verifying $|p| > \delta e^{-\frac{s}{2}}$. Hence we may restrict this union to indices $p,q \in \OOO_K$ with $0 < |p| \leq M$. This describe only a finite set for $p$. Now, for each such point $p$, the equality $\diam(\CC/p\OOO_K)=|p|\diam(\CC/\OOO_K)$ implies that the map $q \mapsto \pr_{\OOO_K}\big(\frac{q}{p}\big)$ is surjective from $\overline{\rm D}(0,|p|\diam(\CC/\OOO_K))\cap\OOO_K$ to $ \pr_{\OOO_K}(\frac{1}{p}\OOO_K)$. Thus, Equation \eqref{eq:union_annuli_finite} holds for the finite set
	$$
	S_M = \{ (p,q) \in \widehat{\OOO_K^{\; 2}} \, : \, 0<|p|<M \mbox{ and } |q| \leq |p| \diam(\CC/\OOO_K) \}.
	$$
	By Equations \eqref{eq:formula_integrand_F_annuli} and \eqref{eq:union_annuli_finite}, for every $s \geq 0$, the function
	$$
	f_s : \delta \mapsto 1 - \frac{2}{\sqrt{|D_K|}} dx\big(\{ z \, : \, \card(\CCC(\overline{\rm D}(0,\delta)) \cap a(-s) \transp{h(z)} \widehat{\OOO_K^{\; 2}})=|\OOO_K^\times|\}\big)
	$$
	is absolutely continuous over $\,]0,+\infty[\,$.
	It is also continuous at $0$ since it is constant equal to $0$ on $[0,e^{\frac{s}{2}}]$ (this is a direct consequence of Equation \eqref{eq:formula_integrand_F_annuli} and the fact that $\sys_{\OOO_K}=1$). This implies that the measure $\mu$ verifies the level repulsion equality $\mu([0,1])=F(1)=0$. Furthermore, since $f_s$ is nondecreasing, its derivative $f_s'$ is defined and nonnegative almost everywhere. And by Equations \eqref{eq:formula_integrand_F_annuli} and \eqref{eq:union_annuli_finite}, the function $(s,\delta) \mapsto f_s(\delta)$ is continuous (hence measurable) on $[0,+\infty[ \, \times [0,+\infty[\,$. Thus, by Fubini's integration theorem we obtain, for every $\delta_1, \delta_2 \geq 0$,
	\begin{align*}
	F(\delta_2) - F(\delta_1) & = 2 \int_{s=0}^{+\infty} (f_s(\delta_2) - f_s(\delta_1)) \, e^{-2s} ds
	\\ & = 2 \int_{\delta=\delta_1}^{\delta_2} \int_{s=0}^{+\infty} f_s'(\delta) \, e^{-2s} ds \, d\delta
	\end{align*}
	Hence $F$ is absolutely continuous, with almost everywhere derivative given by the formula $F'(\delta) = 2 \int_{s=0}^{+\infty} f_s'(\delta) \, e^{-2s} ds$.
\end{proof}

\section{Tails and details}
In this section, we present an explicit tail estimate and present it in this section. For all real numbers $s,\delta >0$, we set once again $f_s(\delta) = \frac{2}{\sqrt{|D_K|}} dx\big(
\bigcup_{\substack{(p,q) \in \widehat{\OOO_K^{\; 2}} \\ p \neq 0}} \pr_{\OOO_K}\big( A\big(\frac{q}{p}, \frac{e^s}{\delta}, \frac{e^\frac{s}{2}}{|p|}\big)
\big) \big)$. Theorem \ref{th:existence_density} and Equations \eqref{eq:formula_integrand_F_annuli} and \eqref{eq:union_annuli_finite} yield the following description of $\mu$.

\begin{corollary}
	For every $\delta > 0$, we have the formula
	$$
	\mu([0,\delta])= 2 \int_{s=0}^{+\infty} f_s(\delta) e^{-2s} \, ds.
	$$
	And for each value of $\delta \geq 0$, the union defining $f_s(\delta)$ can be restricted to a finite set of indices $(p,q)$ without changing the whole union (hence without changing the value $f_s(\delta)$), in a uniform way over $\delta$ varying in any bounded interval and $s$ varying in $[0,+\infty[\,$.
\end{corollary}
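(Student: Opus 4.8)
The plan is to derive both assertions by repackaging facts already proved inside the proof of Theorem~\ref{th:existence_density}. Recall from there that $\mu([0,\delta])=F(\delta)$, where
$$
F(\delta)=1-\frac{4}{\sqrt{|D_K|}} \int_{s=0}^{+\infty} dx\big(\{ z \, : \, \card(\CCC(\overline{\rm D}(0,\delta)) \cap a(-s) \transp{h(z)} \widehat{\OOO_K^{\; 2}}) = |\OOO_K^\times| \}\big) \, e^{-2s}\,ds .
$$
The first step is to rewrite the integrand via Equation~\eqref{eq:formula_integrand_F_annuli}, which expresses the measure of this ``level one'' set as $\frac{\sqrt{|D_K|}}{2}$ minus $dx\big(\bigcup_{(p,q),\,p\neq 0}\pr_{\OOO_K}(A(\frac{q}{p},\frac{e^s}{\delta},\frac{e^{s/2}}{|p|}))\big)$. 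Since by definition $f_s(\delta)=\frac{2}{\sqrt{|D_K|}}\,dx\big(\bigcup_{(p,q),\,p\neq 0}\pr_{\OOO_K}(A(\frac{q}{p},\frac{e^s}{\delta},\frac{e^{s/2}}{|p|}))\big)$, that measure equals $\frac{\sqrt{|D_K|}}{2}\,(1-f_s(\delta))$, so $\frac{4}{\sqrt{|D_K|}}$ times it is $2\,(1-f_s(\delta))$. Note also $0\leq f_s(\delta)\leq 1$, because $dx$ has total mass $\frac{\sqrt{|D_K|}}{2}$ on $\CC/\OOO_K$; hence all integrals below converge absolutely.

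Substituting into the definition of $F$ and splitting the integral gives
$$
F(\delta) = 1 - \int_{s=0}^{+\infty} 2\,(1-f_s(\delta))\,e^{-2s}\,ds = 1 - \Big(\int_{s=0}^{+\infty} 2 e^{-2s}\,ds\Big) + 2\int_{s=0}^{+\infty} f_s(\delta)\,e^{-2s}\,ds = 2\int_{s=0}^{+\infty} f_s(\delta)\,e^{-2s}\,ds ,
$$
where the constant term disappears because $\int_{0}^{+\infty} 2 e^{-2s}\,ds = 1$. Combined with $\mu([0,\delta])=F(\delta)$, this is exactly the announced formula for every $\delta>0$.

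For the second assertion I would simply invoke Equation~\eqref{eq:union_annuli_finite}: for any bound $M>0$, the finite set $S_M = \{ (p,q) \in \widehat{\OOO_K^{\; 2}} \, : \, 0<|p|<M \text{ and } |q| \leq |p| \diam(\CC/\OOO_K) \}$ satisfies, for all $\delta \in \,]0,M]$ and all $s\geq 0$, the equality $\bigcup_{(p,q),\,p\neq 0} \pr_{\OOO_K}(A(\frac{q}{p},\frac{e^s}{\delta},\frac{e^{s/2}}{|p|})) = \bigcup_{(p,q)\in S_M} \pr_{\OOO_K}(A(\frac{q}{p},\frac{e^s}{\delta},\frac{e^{s/2}}{|p|}))$. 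This is precisely the claimed uniform restriction of the index set as $\delta$ ranges over the bounded interval $]0,M]$ and $s$ over $[0,+\infty[\,$; the degenerate value $\delta=0$ is vacuous, since then each annulus has infinite inner radius hence is empty and the union is already reduced to the empty index set.

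There is essentially no obstacle here: the statement is a formal consequence of the intermediate results in the proof of Theorem~\ref{th:existence_density}. The only points deserving a moment's care are that the constants $\frac{4}{\sqrt{|D_K|}}$, $\frac{\sqrt{|D_K|}}{2}$ and the normalisation $\int_0^{+\infty} 2 e^{-2s}\,ds = 1$ indeed combine so as to cancel the constant term, and that Equation~\eqref{eq:union_annuli_finite} was established uniformly in $s\geq 0$ (which it was, $S_M$ being chosen independently of $s$).
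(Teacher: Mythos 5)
Your proof is correct and follows exactly the route the paper intends: the paper derives this corollary in one line from Theorem \ref{th:existence_density} together with Equations \eqref{eq:formula_integrand_F_annuli} and \eqref{eq:union_annuli_finite}, and your argument just makes the bookkeeping with the constants $\frac{4}{\sqrt{|D_K|}}$, $\frac{\sqrt{|D_K|}}{2}$ and $\int_0^{+\infty}2e^{-2s}\,ds=1$ explicit, plus the citation of the finite index set $S_M$ for the uniform finiteness claim. Nothing is missing.
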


The following lemma begins the study of the functions $f_s$. We use the standard notation
$$
\omega_K=\left\{
\begin{array}{cl}
	i \frac{\sqrt{|D_K|}}{2} & \mbox{ if } D_K=0 \mod 4, \vspace{2mm}
	\\ \frac{1+i\sqrt{|D_K|}}{2} & \mbox{ if } D_K=1 \mod 4,
\end{array} \right.
$$
so that $\OOO_K=\ZZ[\omega_K]$. We use the notation $\ln$ for the natural logarithm function.
\begin{lemma}\label{lem:begin_study_fsdelta}
	\begin{itemize}
		\item[$\bullet$] Let
		$$
		s_\pm = 2 \ln\big( \frac{\delta}{2} \pm \frac{\delta}{2} \sqrt{1 - \frac{4|\omega_K|}{\delta}} \big).
		$$
		If $s,\delta$ verify the inequalities $\delta \geq 4|\omega_K|$ and $s_- \leq s \leq s_+$, then we have the equality $f_s(\delta)=1$. The same equality holds if we replace the assumptions on $s,\delta$ by $2 \ln|\omega_K| \leq s \leq \ln(\frac{\delta}{2})$.
		\item[$\bullet$] If $s \geq 2 \ln(\delta)$, then $f_s(\delta)=0$.
		\item[$\bullet$] Set $c_K = \min\{|z| \, : \, z \in \OOO_K, |z| > 1\}$. If $s \geq 2 \ln(\frac{|1+\omega_K| c_K}{c_K-1})$, then the expression of $f_s(\delta)$ can be simplified as follows:
		$$
		f_s(\delta) = \frac{2}{\sqrt{|D_K|}} dx\big(\pr_{\OOO_K}\big( A\big(0, \frac{e^s}{\delta}, e^\frac{s}{2}\big)
		\big) \big).
		$$
	\end{itemize}
\end{lemma}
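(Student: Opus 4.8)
The plan is to recast $f_s(\delta)$ as a covering problem for annuli in $\CC$ and to settle it by elementary Euclidean geometry of the lattice $\OOO_K=\ZZ[\omega_K]$. By the definition of $f_s(\delta)$ recalled just before the lemma (see also Equations \eqref{eq:formula_integrand_F_annuli} and \eqref{eq:union_annuli_finite}), writing $r=\frac{e^s}{\delta}$, $R=e^{s/2}$ and $U=\bigcup_{(p,q)\in\widehat{\OOO_K^{\;2}},\,p\neq 0}A\bigl(\frac{q}{p},r,\frac{R}{|p|}\bigr)\subset\CC$, we have $f_s(\delta)=\frac{2}{\sqrt{|D_K|}}\,dx\bigl(\pr_{\OOO_K}(U)\bigr)$. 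So it suffices, for the first bullet, to prove $U=\CC$; for the second, that $U$ is Lebesgue-null; and for the third, that $U=A(0,r,R)+\OOO_K$ up to a null set. I will use throughout: an annulus $A(z_0,a,b)$ is non-null iff $a<b$, hence the term of index $(p,q)$ is non-null only if $|p|\leq R/r=\delta e^{-s/2}$; every non-unit of $\OOO_K$ has modulus $\geq c_K>1$; and, tiling $\CC$ by translates of the centrally symmetric fundamental parallelogram (of diameter $|1+\omega_K|$, since $\Re\omega_K\geq 0$), the covering radius $\rho$ of $\OOO_K$ obeys $2\rho\leq|1+\omega_K|$, whence also $\rho\leq|\omega_K|$ (a direct check gives $|1+\omega_K|^2\leq 4|\omega_K|^2$ from the explicit form of $\omega_K$).

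The second and third bullets come quickly. If $s\geq 2\ln\delta$ then $\delta e^{-s/2}\leq 1$, so only $|p|=1$ can give a non-null term; but then its outer radius is $R=e^{s/2}\leq e^s/\delta=r$, so it too is null and $f_s(\delta)=0$. For the third bullet put $c_0=\frac{|1+\omega_K|c_K}{c_K-1}$ and assume $e^{s/2}\geq c_0$. If moreover $e^{s/2}>\delta/c_K$, then $\delta e^{-s/2}<c_K$, so only $|p|=1$ survives and $U=A(0,r,R)+\OOO_K$, as wanted. Otherwise $c_0\leq e^{s/2}\leq\delta/c_K$, and then $R-r=e^{s/2}\bigl(1-\tfrac{e^{s/2}}{\delta}\bigr)\geq e^{s/2}\bigl(1-\tfrac1{c_K}\bigr)\geq c_0\bigl(1-\tfrac1{c_K}\bigr)=|1+\omega_K|\geq 2\rho$; since any annulus of radial width at least $2\rho$ meets $\OOO_K$ (take a point on its mid-radius circle and the lattice point within $\rho$ of it), already $\bigcup_{\lambda\in\OOO_K}A(\lambda,r,R)=\CC$, so $U=\CC$ and simultaneously $A(0,r,R)+\OOO_K=\CC$, and all three expressions for $f_s(\delta)$ collapse to $1$.

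For the first bullet, set $u=e^{s/2}$, so $r=u^2/\delta$ and $R=u$. Since $R-r=u-u^2/\delta$, one checks by elementary algebra that $s_-\leq s\leq s_+$ is equivalent to $u^2-\delta u+\delta|\omega_K|\leq 0$, i.e.\ to $R-r\geq|\omega_K|$, and that this range of $s$ is non-empty exactly when $\delta\geq 4|\omega_K|$, in which case $u\geq u_->|\omega_K|\geq\rho$ so $R>\rho$; likewise $2\ln|\omega_K|\leq s\leq\ln(\delta/2)$ is equivalent to $R=u\geq|\omega_K|$ (so $R\geq\rho$) and $r=u^2/\delta\leq\tfrac12$. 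In either case I claim the unit-denominator annuli already cover $\CC$, i.e.\ $A(w,r,R)$ contains a point of $\OOO_K$ for every $w\in\CC$ (and then $w\in A(\lambda,r,R)\subset U$ for that point). Let $\lambda_0\in\OOO_K$ be nearest to $w$, so $|w-\lambda_0|=d(w,\OOO_K)\leq\rho\leq R$. If $d(w,\OOO_K)\geq r$, then $\lambda_0\in A(w,r,R)$. If $d(w,\OOO_K)<r$, then under the first set of hypotheses I use the lattice line $L=\lambda_0+\RR v_1$ through $\lambda_0$ in the direction of a shortest nonzero vector $v_1$ of $\OOO_K$ (so $\OOO_K\cap L=\lambda_0+\ZZ v_1$ with $|v_1|=\sys_{\OOO_K}=1$): the distance $d_\perp$ from $w$ to $L$ is at most $|w-\lambda_0|<r$, and the points of $L$ lying at distance in $[r,R]$ from $w$ form two closed intervals each of length $\sqrt{R^2-d_\perp^2}-\sqrt{r^2-d_\perp^2}\geq R-r\geq|\omega_K|\geq 1$, so each contains a point of $\OOO_K\cap L$. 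Under the second set of hypotheses I instead pick a lattice vector $v$ of length $\leq|\omega_K|$ making an angle at most $45^\circ$ with $w-\lambda_0$ (one exists among $\pm1$, $\pm\omega_K$, and also $\pm(\omega_K-1)$ if $D_K\equiv 1\bmod 4$, which together point in directions with all angular gaps $\leq 90^\circ$): then $2\,\Re\bigl(\overline{v}\,(w-\lambda_0)\bigr)\geq\sqrt2\,|v|\,|w-\lambda_0|\geq|w-\lambda_0|^2$ forces $|w-(\lambda_0+v)|\leq|v|\leq|\omega_K|\leq R$, while $|w-(\lambda_0+v)|\geq|v|-|w-\lambda_0|\geq 1-\tfrac12\geq r$, so $\lambda_0+v\in A(w,r,R)$.

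The one genuinely delicate point is the case $d(w,\OOO_K)<r$ of the first bullet under $R-r\geq|\omega_K|$: one has to recognise that the relevant object is the lattice line through $\lambda_0$ in a shortest-vector direction, and that the hypothesis is tuned precisely so that the radial width $R-r$ of the annulus — which bounds below the length of each chord the annulus cuts on that line — is at least the spacing $\sys_{\OOO_K}=1$ of the lattice points along it. Everything else is routine verification: the two algebraic equivalences relating the ranges of $s$ to the shape $(r,R)$ of the annulus, and the elementary lattice facts ($\rho\leq|\omega_K|$, $2\rho\leq|1+\omega_K|$, and the $45^\circ$-covering of directions by short vectors), each to be treated separately in the two congruence classes of $D_K$ modulo $4$.
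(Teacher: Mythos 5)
Your proof is correct, and its skeleton is the same as the paper's: each bullet is reduced, exactly as in the paper, to a covering or nullity statement for the lattice translates of the annuli, with the thresholds entering identically ($s_\pm$ as the zeros of $s\mapsto e^{s/2}-\frac{e^s}{\delta}-|\omega_K|$, the widths compared to $|\omega_K|$ and $|1+\omega_K|$, and $c_K$ isolating the unit denominators). Where you genuinely differ is in how the covering facts are certified. For the first bullet the paper argues that a single annulus of radial width at least $|\omega_K|$ projects onto the whole torus (via a translated fundamental domain and a curvature remark), and, in the regime $2\ln|\omega_K|\le s\le\ln(\frac{\delta}{2})$, that the four translates of $A(0,\frac12,|\omega_K|)$ at the vertices of $[0,1]+[0,1]\omega_K$ cover a fundamental domain; you instead work at the nearest lattice point $\lambda_0$ to an arbitrary $w$, using in the first regime the chord the annulus cuts on the lattice line $\lambda_0+\RR v_1$ with $|v_1|=\sys_{\OOO_K}=1$ (radial width $\ge|\omega_K|\ge 1$ guarantees a lattice point on each chord, by the monotonicity of $c\mapsto\sqrt{R^2-c}-\sqrt{r^2-c}$), and in the second regime a lattice vector of length in $[1,|\omega_K|]$ within $45^\circ$ of $w-\lambda_0$. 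For the third bullet the paper places a disk of diameter $\ge|1+\omega_K|=\diam(\CC/\OOO_K)$ inside $A(z,\frac{e^s}{\delta},e^{s/2})$ and invokes convexity to find a fundamental domain (hence a lattice point) inside it; you use instead the covering-radius bound $2\rho\le|1+\omega_K|$ together with the mid-radius circle, after splitting according to whether $e^{s/2}>\delta/c_K$ (only unit denominators survive) or not (everything is covered). The net effect is the same constants and the same conclusions; your versions of the covering lemmas are more explicit and quantitative, which makes the borderline cases ($s=s_\pm$, $d(w,\OOO_K)<\frac{e^s}{\delta}$, $w\in\OOO_K$) easy to audit, at the cost of a small case analysis that the paper's more synthetic statements sidestep.
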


\begin{proof}
	\begin{itemize}
		\item[$\bullet$] We assume firt that $\delta \geq 4|\omega_K|$ and $s_- \leq s \leq s_+$. Notice that $s_\pm$ are the zeros of the function $s \mapsto e^{\frac{s}{2}} - \frac{e^s}{\delta} - |\omega_K|$. Thus, for $s \in [s_-,s_+]$, the annulus $A(0,\frac{e^s}{\delta},e^{\frac{s}{2}})$ has a difference of radii greater than (or equal to) $|\omega_K|$, allowing it to project onto the whole torus $\CC/\OOO_K$ (using an appropriate translate of the fundamental domain $[0,1]+[0,1]\omega_K$ and the decrease of curvature of a disk when its radius increases). By definition, we then obtain $f_s(\delta)=1$.
		
		If we assume $2 \ln|\omega_K| < s < \ln(\frac{\delta}{2})$ instead, then the consequential inclusion of annuli $A(0,\frac{1}{2}, |\omega_K|) \subset A(0,\frac{e^s}{\delta},e^{\frac{s}{2}})$ and the fact that the annulus $A(0,\frac{1}{2}, |\omega_K|)$ projects onto the whole torus $\CC/\OOO_K$ (using the four translates $A(z,\frac{1}{2},|\omega_K|)$ centred at $z=0, 1, \omega_K$ and $1+\omega_K$ and the fundamental domain $[0,1]+[0,1]\omega_K$) prove that we still have $f_s(\delta)=1$.

		\item[$\bullet$] If $s \geq 2\ln(\delta)$, then for every point $p \in \OOO_K \ssm \{0\}$, we have $\frac{e^s}{\delta} \geq e^\frac{s}{2} \geq \frac{e^\frac{s}{2}}{|p|}$ so that each annulus $A(\frac{q}{p}, \frac{e^s}{\delta}, \frac{e^\frac{s}{2}}{|p|})$ has measure zero (it is empty or equal to a circle).
		
		\item[$\bullet$] Let $p,q \in \OOO_K$ be such that $p \neq 0$ and let $z \in A(\frac{q}{p}, \frac{e^s}{\delta},\frac{e^\frac{s}{2}}{|p|})$. We want to prove that $\pr_{\OOO_K}(z) \in \pr_{\OOO_K}(A(0,\frac{e^s}{\delta}, e^\frac{s}{2}))$ or equivalently that there exists $q' \in \OOO_K$ such that $q'\in A(z,\frac{e^s}{\delta}, e^\frac{s}{2})$. If $p \in \OOO_K^\times$, the latter already holds with $q'=qp^{-1}$. Otherwise, first notice that we have the inequality $|p| \geq c_K$, hence $\frac{e^\frac{s}{2}}{c_K} \geq \frac{e^s}{\delta}$ since $A(\frac{q}{p}, \frac{e^s}{\delta},\frac{e^\frac{s}{2}}{|p|})$ is nonempty. In addition, the annulus $A(z,\frac{e^s}{\delta}, e^\frac{s}{2})$ contains a disk of diameter $L=e^\frac{s}{2}-\frac{e^s}{\delta} \geq e^\frac{s}{2}(1-\frac{1}{c_K})$. If $L \geq |1+\omega_K|$ ($= \diam_{\OOO_K}$), then this disk contains a fundamental domain of $\OOO_K$ (by convexity), thus contains an integer point $q' \in \OOO_K$. Thus, the condition $e^\frac{s}{2}(1-\frac{1}{c_K}) \geq |1+\omega_K|$ is sufficient to simplify the expression of $f_s(\delta)$ as stated in the lemma.
	\end{itemize}
\vspace{-0.5cm}
\end{proof}

\begin{remark}{\rm
		The third point of Lemma \ref{lem:begin_study_fsdelta} will not be used in this paper, as we will focus on the asymptotic gap distribution tail, and for that purpose the first two points of this lemma will be sufficient. Nevertheless, we believe it could be a useful tool for a later attempt at studying more thoroughly this distribution. The explicit value of $c_K$ in this third point is simply given by
		$$
		c_K=
		\left\{
		\begin{array}{cl}
			\sqrt{2} & \mbox{ if } \OOO_K=\ZZ[i], \ZZ[i\sqrt{2}] \mbox{ or } \ZZ[\frac{1+i\sqrt{7}}{2}] \quad (\mbox{i.e.~} D_K = -4, -8 \mbox{ or } -7),
			\\ \sqrt{3} & \mbox{ if } \OOO_K=\ZZ[j] \mbox{ or } \ZZ[\frac{1+i\sqrt{11}}{2}] \quad (\mbox{i.e.~} D_K = -3 \mbox{ or } -11),
			\\ 2 & \mbox{ otherwise } \quad (\mbox{i.e.~} D_K \leq -15).
		\end{array}
		\right.
		$$
	}
\end{remark}

The following corollary states that the asymptotic distribution for gaps in complex Farey fractions is \emph{heavy tailed} (the tail distribution decays at less than exponential speed).
\begin{corollary}\label{cor:tail}
	As $\delta$ goes to $+\infty$, we have the estimate
	$$
	\frac{1}{\delta^4} \leq \mu(\, ]\delta,+\infty[\,) - 2 \int_{0}^{2\ln|\omega_K|} (1-f_s(\delta)) e^{-2s} \, ds  \leq \frac{1}{\delta^4} + \bigO\big(\frac{|\omega_K|}{\delta^5}\big).
	$$
\end{corollary}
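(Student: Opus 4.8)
The plan is to reduce the left-hand quantity to a tail integral of $1-f_s$. By the corollary preceding Lemma~\ref{lem:begin_study_fsdelta} we have $\mu([0,\delta]\,)=2\int_{0}^{+\infty}f_s(\delta)\,e^{-2s}\,ds$, and since $2\int_{0}^{+\infty}e^{-2s}\,ds=1$ this gives $\mu(\,]\delta,+\infty[\,)=1-\mu([0,\delta]\,)=2\int_{0}^{+\infty}(1-f_s(\delta))\,e^{-2s}\,ds$. Subtracting $2\int_{0}^{2\ln|\omega_K|}(1-f_s(\delta))\,e^{-2s}\,ds$, the quantity to estimate is exactly
$$E(\delta):=2\int_{2\ln|\omega_K|}^{+\infty}(1-f_s(\delta))\,e^{-2s}\,ds,$$
and since $0\le f_s(\delta)\le 1$ for every $s$ (as $f_s(\delta)$ is $\tfrac{2}{\sqrt{|D_K|}}$ times the $dx$-measure of a subset of the torus, which has total mass $\tfrac{\sqrt{|D_K|}}{2}$), everything comes down to locating the set of $s\ge 2\ln|\omega_K|$ on which $f_s(\delta)\ne 1$.

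Next I would invoke Lemma~\ref{lem:begin_study_fsdelta} for $\delta$ large enough (depending on $K$) that $\delta\ge 8|\omega_K|^2$, so in particular $\delta\ge 4|\omega_K|$ and $s_\pm$ are real. Its first bullet gives $f_s(\delta)=1$ on $[2\ln|\omega_K|,\ln(\delta/2)]$ and on $[s_-,s_+]$. Using $\sqrt{1-x}\ge 1-x$ on $[0,1]$ one checks $s_-\le 2\ln(2|\omega_K|)\le\ln(\delta/2)$ and $s_+\ge 2\ln(\delta-2|\omega_K|)\ge\ln(\delta/2)$ for $\delta$ large, so these two intervals overlap (both contain $\ln(\delta/2)$) and their union is the single interval $[2\ln|\omega_K|,s_+]$; hence $f_s(\delta)=1$ there. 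The second bullet gives $f_s(\delta)=0$ for $s\ge 2\ln\delta$, and from $R:=\tfrac{\delta}{2}\big(1+\sqrt{1-4|\omega_K|/\delta}\big)\le\delta$ we get $s_+=2\ln R\le 2\ln\delta$.

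Then $E(\delta)=2\int_{s_+}^{+\infty}(1-f_s(\delta))\,e^{-2s}\,ds$ since the integrand vanishes on $[2\ln|\omega_K|,s_+]$; splitting this at $2\ln\delta$ and using $2\int_a^b e^{-2s}\,ds=e^{-2a}-e^{-2b}$, the part over $[2\ln\delta,+\infty[\,$ contributes exactly $e^{-4\ln\delta}=\delta^{-4}$ (there $f_s(\delta)=0$), while the part over $[s_+,2\ln\delta]$ lies between $0$ and $2\int_{s_+}^{2\ln\delta}e^{-2s}\,ds=e^{-2s_+}-\delta^{-4}$. Thus $\delta^{-4}\le E(\delta)\le e^{-2s_+}$, and it remains to expand $e^{-2s_+}=R^{-4}$. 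From $\delta-2|\omega_K|\le R\le\delta$ (the left inequality again from $\sqrt{1-x}\ge 1-x$), for $\delta\ge 4|\omega_K|$ we get $e^{-2s_+}-\delta^{-4}\le(\delta-2|\omega_K|)^{-4}-\delta^{-4}=\delta^{-4}\big((1-2|\omega_K|/\delta)^{-4}-1\big)=\bigO(|\omega_K|/\delta^5)$, using $(1-x)^{-4}-1=\bigO(x)$ on $[0,\tfrac12]$. Combining the two bounds for $E(\delta)$ yields the announced double inequality.

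The computations here are elementary; the only substantive input is Lemma~\ref{lem:begin_study_fsdelta}, and specifically its first bullet, which forces $f_s(\delta)=1$ all the way up to $s_+\approx 2\ln\delta$ rather than merely up to $\ln(\delta/2)$ --- without that, one would only control the integrand from $s=\ln(\delta/2)$ on and obtain the far weaker tail bound $\bigO(1/\delta^2)$. The one step requiring a little care is verifying, for all large $\delta$, that the two intervals on which the lemma guarantees $f_s(\delta)=1$ genuinely overlap, so that their union is a single interval reaching $s_+$; this is exactly the purpose of the crude estimates $s_-\le 2\ln(2|\omega_K|)$ and $s_+\ge 2\ln(\delta-2|\omega_K|)$ above.
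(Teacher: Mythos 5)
Your proof is correct and follows essentially the same route as the paper: both rewrite $\mu(\,]\delta,+\infty[\,)=2\int_0^{+\infty}(1-f_s(\delta))e^{-2s}\,ds$, use the first two bullets of Lemma \ref{lem:begin_study_fsdelta} to kill the integrand on $[2\ln|\omega_K|,s_+]$ (the paper phrases this as restricting the integral on $[0,s_-]$ to $[0,2\ln|\omega_K|]$ via $s_-\leq\ln(\delta/2)$, you phrase it as the overlap of the two intervals), extract the exact contribution $\delta^{-4}$ from $[2\ln\delta,+\infty[\,$, and bound the piece over $[s_+,2\ln\delta]$ by $e^{-2s_+}-\delta^{-4}=\bigO(|\omega_K|/\delta^5)$ using $e^{s_+/2}\geq\delta-2|\omega_K|$. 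No gaps; your explicit verification that the two intervals of the first bullet overlap for $\delta\geq 8|\omega_K|^2$ is a welcome detail.
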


\begin{proof}
	Take $s_\pm$ defined in Lemma \ref{lem:begin_study_fsdelta}. We write $\mu(\,]\delta,+\infty[\,) = 2 \int_0^{+\infty}(1-f_s(\delta)) e^{-2s}\, ds$ and decompose the integral between $0$, $s_-$, $s_+$, $2\ln(\delta)$ and $+\infty$. Thanks to the first two points of Lemma \ref{lem:begin_study_fsdelta}, for $\delta \geq 4|\omega_K|$, we have
	\begin{equation}\label{eq:decompo_tail}
	\mu(\,]\delta,+\infty[\,) = 2 \int_{0}^{s_-} (1-f_s(\delta)) e^{-2s} \, ds + 2 \int_{s_+}^{2\ln(\delta)} (1-f_s(\delta)) e^{-2s} \, ds + \frac{1}{\delta^4}.
	\end{equation}
	The lower bound is then immediately obtained using the inequalities $f_s(\delta) \leq 1$ and $s_- \geq 2\ln|\omega_K|$.
	
	For the upper bound, we first notice the estimates, as $\delta \to +\infty$,
	$$
	s_- = 2\ln|\omega_K| + \bigO(\frac{|\omega_K|}{\delta}) \mbox{ and } s_+ = 2 \ln (\delta - \bigO(|\omega_K|)).
	$$
	We compute, as $\delta \to +\infty$,
	\begin{align*}
	2 \int_{s_+}^{2\ln(\delta)} (1-f_s(\delta)) e^{-2s} \, ds & \leq  2 \int_{s_+}^{2\ln(\delta)} e^{-2s} \, ds
	\\ & = - \frac{1}{\delta^4} + e^{-2s_+} = -\frac{1}{\delta^4}+\frac{1}{\delta^4}\big(1+\bigO\big(\frac{|\omega_K|}{\delta}\big)\big) = \bigO\big( \frac{|\omega_K|}{\delta^5}\big).
	\end{align*}
	Finally, the remaining integral on $[0,s_-]$ in Equation \eqref{eq:decompo_tail} can be restricted to $[0,2\ln|\omega_K|]$ for $\delta$ large enough so that $\ln(\frac{\delta}{2}) \geq s_- \geq 2\ln|\omega_K|$ (here we used the decreasing convergence $s_- \to 2\ln|\omega_K|$).
\end{proof}

In the cases $\OOO_K = \ZZ[i]$ or $\ZZ[j]$, we have $|\omega_K|=1$ so that Corollary \ref{cor:tail} provides the following tail estimate (illustrated in Figure \ref{fig:tail_approx_zi}), proving the last point of Theorem \ref{th:intro}: as $\delta\to+\infty$, we have
\begin{equation}\label{eq:tail_estimate_zij}
\mu(\, ]\delta, +\infty[\,) = \frac{1}{\delta^4} + \bigO \big(\frac{1}{\delta^5}\big).
\end{equation}

\begin{figure}[ht]
\centering
\scalebox{1}{
\begin{adjustbox}{clip, trim=1.7cm 0.5cm 1.7cm 1.4cm, max width=\textwidth}
	\includegraphics{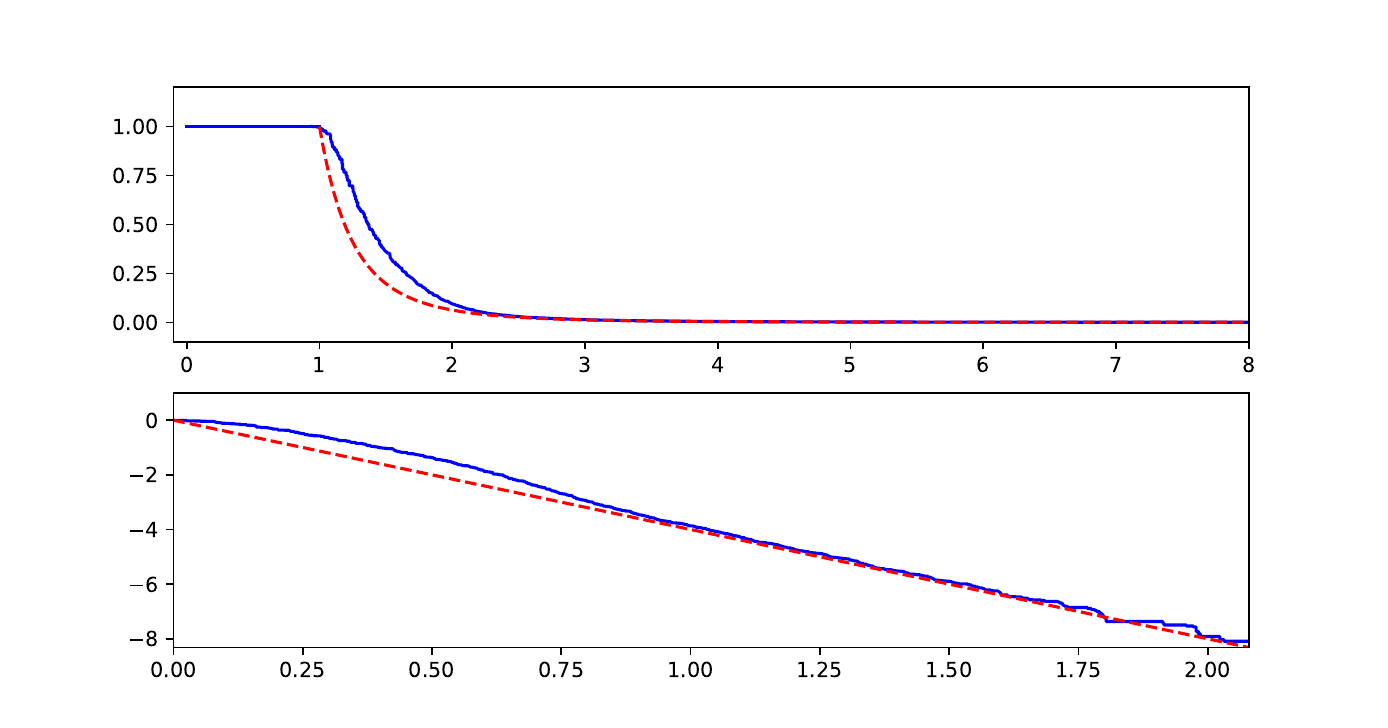}
\end{adjustbox}
}
\caption{At the top, the empirical tail distribution $\delta \to \mu_t(\, ]\delta, +\infty[\,)$ (in \textcolor{blue}{blue}) and the graph of $\delta \mapsto \frac{1}{\delta^4}$ (in \textcolor{red}{red}) using Gaussian fractions and the height value $e^\frac{t}{2}=30$. At the bottom, a logarithmic version of the top graph: we illustrate the estimate \eqref{eq:tail_estimate_zij} by comparing the functions $\ell \mapsto \ln(\mu_t(]e^\ell, +\infty[\,))$ and $\ell \mapsto -4\ell$.}
\label{fig:tail_approx_zi}
\end{figure}
In all other cases, we have $|\omega_K| > 1$, and an estimate of the remaining integral $2 \int_{0}^{2\ln|\omega_K|} (1-f_s(\delta)) e^{-2s} \, ds$ has yet to be computed. We conjecture this term not to be negligible with respect to $\frac{1}{\delta^4}$ when $|\omega_K| > 1$, because of numerical tests. An illustration is given in the case $K=\QQ(i\sqrt{163})$ (with principal ring of integers) in Figure \ref{fig:tail_approx_zsqrt163}.

\begin{figure}[!ht]
	\centering
	\scalebox{1}{
		\begin{adjustbox}{clip, trim=1.7cm 0.5cm 1.7cm 6.6cm, max width=\textwidth}
			\includegraphics{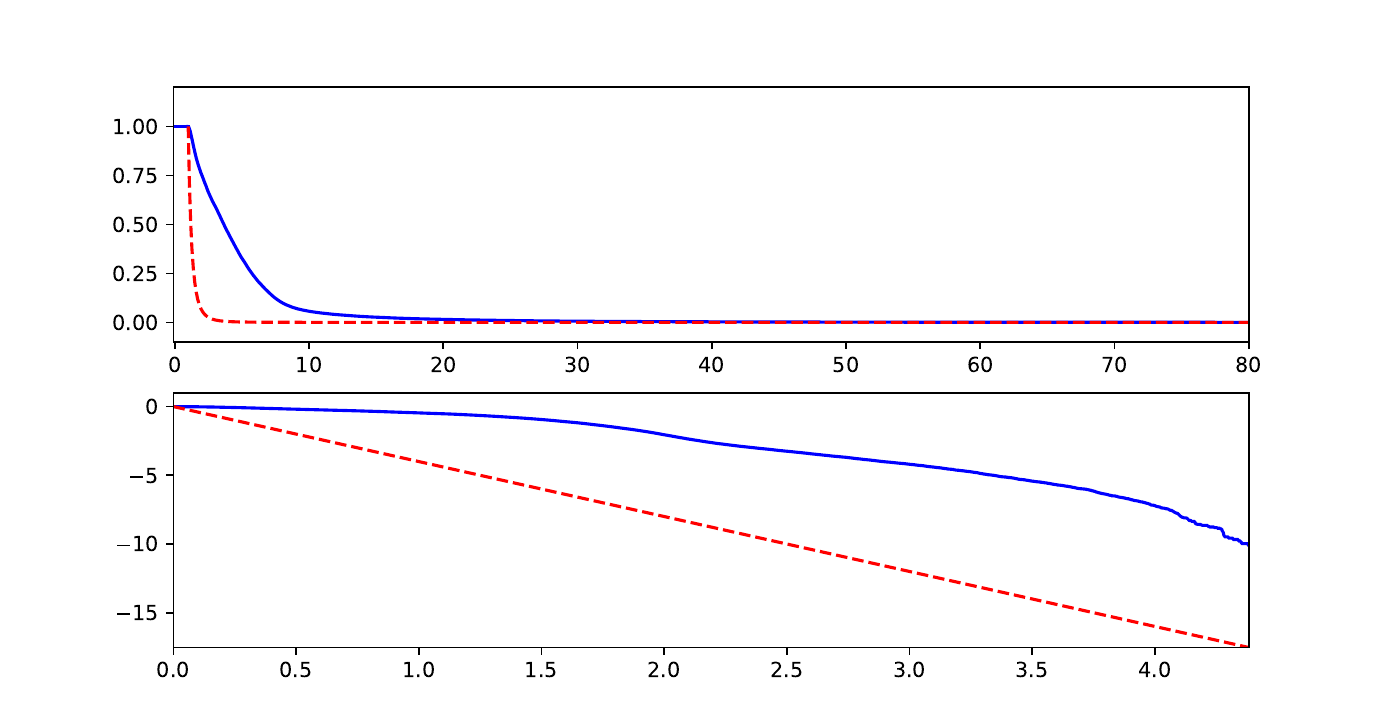}
		\end{adjustbox}
	}
	\caption{Similarly to the bottom graphs in Figure \ref{fig:tail_approx_zi}, a logarithmic comparison between the empirical tail distribution $\delta \to \mu_t(\, ]\delta, +\infty[\,)$ (in \textcolor{blue}{blue}) and the function $\delta \mapsto \frac{1}{\delta^4}$ (in \textcolor{red}{red}) for $\OOO_K=\ZZ[\frac{1+i\sqrt{163}}{2}]$. The estimate \eqref{eq:tail_estimate_zij} is conjectured not to hold in this case.}
	\label{fig:tail_approx_zsqrt163}
\end{figure}

\section{Further questions}
One could try and improve Theorem \ref{th:existence_density} by finding a closed formula for the asymptotic gap cumulative distribution function $\delta \mapsto \mu([0,\delta])$ (or equivalently, for the density $\frac{d \mu}{d \Leb_{[0,+\infty[}}$), which seems to be a really demanding challenge. A probably less involved improvement could be the computation of a tail estimate similar to Equation \eqref{eq:tail_estimate_zij} for other discriminants than $-4$ and $-3$.

In another direction, we could impose one of the two following conditions on the complex Farey fractions we study: a congruence condition on $q$ or changing the class of ideal generated by $p$ and $q$. See \cite{boca2014gapsfareydivisibilityconstraints} and \cite[§~7]{heersink2016poincaresetions} for a study of gaps with such conditions using the classical real Farey fractions, and \cite{boca2022paircorrfarey} for a study of pair correlations in the same setting. Fixing two nonzero ideals  $\mmm$ and $\mmm'$ of $\OOO_K$ with $\mmm \mid \mmm'$, one could study the asymptotic gap statistics, as $t \to +\infty$, for the set
$$
\F_{t,\mmm,\mmm'} = \big\{ \frac{p}{q} \mod \OOO_K \, : \, p\in \mmm, q\in\mmm', \, p \OOO_K + q\OOO_K = \mmm \mbox{ and } 0 < N(\mmm)^{-1} |q|^2 \leq e^t\big\}.
$$
We expect that there exists a probability measure, similar to the measure $\mu$ from Theorem \ref{th:existence_density} (and perhaps equal to $\mu$, depending on the choice of rescaling), which describes the asymptotic gap statistic.

When $\mmm=\OOO_K$, this study is pretty straightforward: we can use the same dynamic as in Section \ref{ssec:homogeneous dynamic}, except we replace $\Gamma$ by its Hecke subgroup
$$\Gamma_{\mmm'}=\Big\{\begin{pmatrix} a & b \\ c & d \end{pmatrix} \in \PSL_2(\OOO_K) \, : \, c \in \mmm'\Big\} = {\rm Stab}_{\PSL_2(\OOO_K)}\Big(\Big\{ \begin{pmatrix} p\\q \end{pmatrix} \in \widehat{\OOO_K^{\; 2}} \, : \, q \in \mmm' \Big\}\Big).$$
Then the proofs of \cite[Cor.~4.2]{parkkonenpaulin2023jointfarey} and Corollary \ref{cor:adapted_PaPa23} can be easily adapted to obtain a completely similar joint equidistribution result for $\F_{t,\OOO_K,\mmm'}$ on $(\CC/\OOO_K) \times M \bs G / \Gamma_{\mmm'}$, which can then be used to prove that the asymptotic gap distribution is the same for $\F_{t,\OOO_K,\mmm'}$ as for $\F_t$.

When $\mmm$ is not principal on the other hand, one would need some volume estimates around another cusp in $\partial_\infty \HH^3_{\RR}$, namely around the cusp associated to the fractional ideal class of $\mmm$, in order to adapt the result \cite[Cor.~4.2]{parkkonenpaulin2023jointfarey} to this study.

Finally, another generalisation could be obtained by the asymptotic study of gaps for Farey fractions in any number field $K$. Denoting by $\iota : K \to \RR^r \times \CC^c$ the canonical embedding, one could conduct a study of asymptotic gaps (i.e.~the nearest neighbour statistic for a well chosen distance on $\RR^r \times \CC^c \mod \iota(\OOO_K)$) for
\begin{align*}
\F_{T_1, \ldots, T_{r+c}}=\big\{ \big(\frac{p_1}{q_1}, \ldots, \frac{p_{r+c}}{q_{r+c}}\big) \mod \iota(\OOO_K) \, : & \, (p_1, \ldots, p_{r+c}), (q_1,\ldots, q_{r+c}) \in \iota(\OOO_K),
\\ & 0 < |q_1| \leq T_1, \, \ldots, 0< |q_{r+c}| \leq T_{r+c} \big\},
\end{align*}
perhaps adding a coprime condition $p\OOO_K + q\OOO_K=\OOO_K$ where $(p_1, \ldots, p_{r+c}) = \iota(p)$ and $(q_1,\ldots, q_{r+c})=\iota(q)$. This project is likely to begin with the study of the gaps in the Farey fractions of a real quadratic number field, and would require a higher rank analog of Theorem \ref{th:PaPa23} for Hilbert modular varieties.

\AtNextBibliography{\small}
\printbibliography[heading=bibintoc, title={References}]

\bigskip
{\small
	\noindent
	\begin{tabular}{l}
		Department of Mathematics and Statistics, P.O.~Box 35,\\
		40014 University of Jyv\"askyl\"a, FINLAND.\\
		{\it e-mail: sayousr@jyu.fi}
	\end{tabular}
}

\smallskip
and
\smallskip

{\small
	\noindent
	\begin{tabular}{l}
		Laboratoire de Mathématiques d'Orsay, UMR 8628 CNRS,\\
		Universit\'e Paris-Saclay, 91405 ORSAY Cedex, FRANCE.\\
		{\it e-mail: rafael.sayous@universite-paris-saclay.fr}
	\end{tabular}
}

\end{document}